\theoremstyle{plain}
\newtheorem{thm}{Theorem}[section]
\newtheorem{prop}[thm]{Proposition}
\newtheorem{lem}[thm]{Lemma}
\theoremstyle{definition}
\def\mapstofill@{%
   \arrowfill@{\mapstochar\relbar}\relbar\longrightarrow}
\newcommand*\xmapsto[2][]{%
   \ext@arrow 0395\mapstofill@{#1}{#2}}
\numberwithin{equation}{section}
\title{Odd number and trapezoidal number 
}
\author{Masanori Ando}
\date{}
\address{Wakhok University, Hokkaido 097-0013, Japan}
\email{m-ando@wakhok.ac.jp}
\keywords{integer partition, lecture hall partition.}
\subjclass[2010]{05A15, 05A17}
\begin{document}
\pagestyle{empty}
\maketitle
\thispagestyle{empty}
\begin{abstract}
In this paper, we give a bijective proof of the reduced lecture hall partition theorem. 
It is possible to extend this bijection in lecture hall partition theorem. 
And refined versions of each theorems are also presented. 
\end{abstract}
\section{Introduction}
\[
\frac{1}{1}=1, \ \frac{3\cdot 2}{1\cdot 3}=2, \ \frac{6\cdot 5\cdot 3}{1\cdot 3\cdot 5}=6, 
\ \frac{10\cdot 9\cdot 7\cdot 4}{1\cdot 3\cdot 5\cdot 7}=24, \ \frac{15\cdot 14\cdot 12\cdot 9\cdot 5}{1\cdot 3\cdot 5\cdot 7\cdot 9}=120. 
\]
\indent 
There is the product of odd numbers in denominator. 
The numbers in numerator are the differences between two triangular numbers. 
And the calculation results are the factorial numbers. 
The proof of this result is not so difficult. 
We consider what generating function is the $q$-analogue of this calculation. 
\section{Odd number and trapezoidal number }
\begin{thm}\label{thm;1}
For any positive integer $n$, 
\[
\displaystyle 
\prod_{k=1}^{n}{\frac{\binom{n+1}{2}-\binom{k}{2}}{2k-1}}
=n!. 
\]
\end{thm}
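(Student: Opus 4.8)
The plan is to reduce everything to a single telescoping computation by first factoring the numerator. The key observation is the algebraic identity
\[
\binom{n+1}{2}-\binom{k}{2}=\frac{n(n+1)-k(k-1)}{2}=\frac{(n+k)(n-k+1)}{2},
\]
which one checks by expanding $n^{2}+n-k^{2}+k=(n-k)(n+k)+(n+k)$. (As sanity checks, $k=1$ gives $\binom{n+1}{2}$ and $k=n$ gives $n$, matching the displayed examples in the introduction.) Substituting this into the left-hand side of Theorem~\ref{thm;1} turns the product into
\[
\prod_{k=1}^{n}\frac{(n+k)(n-k+1)}{2(2k-1)}.
\]

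Next I would split this into four elementary products and evaluate each in closed form:
\[
\prod_{k=1}^{n}(n+k)=(n+1)(n+2)\cdots(2n)=\frac{(2n)!}{n!},\qquad
\prod_{k=1}^{n}(n-k+1)=n!,
\]
\[
\prod_{k=1}^{n}(2k-1)=1\cdot 3\cdot 5\cdots(2n-1)=\frac{(2n)!}{2^{n}\,n!},\qquad
\prod_{k=1}^{n}2=2^{n}.
\]
Assembling the numerator gives $\dfrac{(2n)!}{n!}\cdot n!=(2n)!$, the denominator gives $2^{n}\cdot\dfrac{(2n)!}{2^{n}n!}=\dfrac{(2n)!}{n!}$, and dividing yields $n!$, which is exactly the claim.

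There is essentially no hard step here: the whole argument is a one-line factorization followed by cancellation of $(2n)!$. If anything, the only point requiring care is verifying the factorization identity for the numerator, since it is what makes the two copies of $(2n)!$ appear and cancel so cleanly; an alternative route by induction on $n$ is possible but messier, because every factor of the product depends on $n$, so passing from $P_{n}$ to $P_{n+1}$ changes all the numerators at once. For that reason I would present the direct computation rather than an inductive proof. Finally, since the later sections promise refined ($q$-analogue) versions, I would note that rewriting the middle display as $\prod_{k=1}^{n}\frac{(n+k)(n-k+1)}{2(2k-1)}$ is the natural form to $q$-deform, with $(2n)!/n!$ and the odd double factorial $1\cdot 3\cdots(2n-1)$ becoming the relevant $q$-factorial expressions.
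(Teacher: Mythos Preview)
Your proof is correct but takes a different route from the paper's. You factor each numerator uniformly as $\binom{n+1}{2}-\binom{k}{2}=\tfrac{(n+k)(n-k+1)}{2}$ and then evaluate the four resulting products globally, cancelling a common factor of $(2n)!$. The paper instead \emph{permutes} the numerators (the ``Skip'') so that the $k$-th numerator becomes the trapezoidal number $\Diamond_{n,k}$, and then shows via Lemma~\ref{lem;1} (with a case split at $k\le n/2$) that $\Diamond_{n,k}/(2k-1)=n-k+1$; thus every factor of the reordered product is already an integer, and the product equals $n!$ term by term. Your argument is shorter and avoids the case distinction, but the paper's termwise identity is exactly what drives the $q$-analogue that follows: because $\Diamond_{n,k}=(n-k+1)(2k-1)$, one gets $(1-q^{\Diamond_{n,k}})/(1-q^{2k-1})=\sum_{i=0}^{n-k}q^{i(2k-1)}$, the generating function for the restricted odd parts. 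Your factorisation $(n+k)(n-k+1)/2$ does not exhibit divisibility by $2k-1$ term by term (e.g.\ $n=6$, $k=2$ gives $20/3$), so the global $(2n)!$ cancellation you propose does not refine to the $q$-level in the way the paper needs.
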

\noindent
\textbf{Figure. }\\
\unitlength 0.1in
\begin{picture}(24.00,8.00)(0.00,-12.00)
\special{pn 20}%
\special{sh 1}%
\special{ar 1000 400 10 10 0  6.28318530717959E+0000}%
\special{sh 1}%
\special{ar 900 600 10 10 0  6.28318530717959E+0000}%
\special{sh 1}%
\special{ar 1100 600 10 10 0  6.28318530717959E+0000}%
\special{sh 1}%
\special{ar 800 800 10 10 0  6.28318530717959E+0000}%
\special{sh 1}%
\special{ar 1000 800 10 10 0  6.28318530717959E+0000}%
\special{sh 1}%
\special{ar 1200 800 10 10 0  6.28318530717959E+0000}%
\special{sh 1}%
\special{ar 700 1000 10 10 0  6.28318530717959E+0000}%
\special{sh 1}%
\special{ar 900 1000 10 10 0  6.28318530717959E+0000}%
\special{sh 1}%
\special{ar 1100 1000 10 10 0  6.28318530717959E+0000}%
\special{sh 1}%
\special{ar 1300 1000 10 10 0  6.28318530717959E+0000}%
\special{sh 1}%
\special{ar 600 1200 10 10 0  6.28318530717959E+0000}%
\special{sh 1}%
\special{ar 800 1200 10 10 0  6.28318530717959E+0000}%
\special{sh 1}%
\special{ar 1000 1200 10 10 0  6.28318530717959E+0000}%
\special{sh 1}%
\special{ar 1200 1200 10 10 0  6.28318530717959E+0000}%
\special{sh 1}%
\special{ar 1400 1200 10 10 0  6.28318530717959E+0000}%
\special{sh 1}%
\special{ar 1400 1200 10 10 0  6.28318530717959E+0000}%

\put(16.0000,-9.0000){\makebox(0,0)[lb]{$-$}}%

\special{pn 20}%
\special{sh 1}%
\special{ar 2100 600 10 10 0  6.28318530717959E+0000}%
\special{sh 1}%
\special{ar 2000 800 10 10 0  6.28318530717959E+0000}%
\special{sh 1}%
\special{ar 2200 800 10 10 0  6.28318530717959E+0000}%
\special{sh 1}%
\special{ar 1900 1000 10 10 0  6.28318530717959E+0000}%
\special{sh 1}%
\special{ar 2100 1000 10 10 0  6.28318530717959E+0000}%
\special{sh 1}%
\special{ar 2300 1000 10 10 0  6.28318530717959E+0000}%
\special{sh 1}%
\special{ar 2300 1000 10 10 0  6.28318530717959E+0000}%

\put(25.0000,-9.0000){\makebox(0,0)[lb]{$=$}}%

\special{pn 20}%
\special{sh 1}%
\special{ar 2900 800 10 10 0  6.28318530717959E+0000}%
\special{sh 1}%
\special{ar 3100 800 10 10 0  6.28318530717959E+0000}%
\special{sh 1}%
\special{ar 3300 800 10 10 0  6.28318530717959E+0000}%
\special{sh 1}%
\special{ar 3500 800 10 10 0  6.28318530717959E+0000}%
\special{sh 1}%
\special{ar 2800 1000 10 10 0  6.28318530717959E+0000}%
\special{sh 1}%
\special{ar 3000 1000 10 10 0  6.28318530717959E+0000}%
\special{sh 1}%
\special{ar 3200 1000 10 10 0  6.28318530717959E+0000}%
\special{sh 1}%
\special{ar 3400 1000 10 10 0  6.28318530717959E+0000}%
\special{sh 1}%
\special{ar 3600 1000 10 10 0  6.28318530717959E+0000}%
\special{sh 1}%
\special{ar 3600 1000 10 10 0  6.28318530717959E+0000}%
\end{picture}%
\\
\\
\indent
Number $n$ means bottom length of trapezoid as it's showed in figure. 
It's natural to classify it by top length. 
However we won't do that.

For positive integer $k,n$, we define trapezoidal number 
\begin{eqnarray*}
{\Diamond}_{n,k}:=\left \{ 
\begin{array}{ll}
n+(n-1)+\cdots +(n-2k+2) &\ \ ({}^\forall k\leq \frac{n}{2})\\
n+(n-1)+\cdots +(2k-n-1) &\ \ ({}^\forall k> \frac{n}{2})
\end{array}
\right.
\end{eqnarray*}
\begin{lem}\label{lem;1}
For positive integer $k,n, k\leq n$, 
\[
\frac{{\Diamond}_{n,k}}{2k-1}=n-k+1. 
\]
\end{lem}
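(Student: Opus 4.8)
The statement splits according to whether $k \le n/2$ or $k > n/2$, so I would handle the two cases separately and just compute each arithmetic-series sum in closed form, then divide by $2k-1$.

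In the first case, $k \le n/2$, the trapezoidal number ${\Diamond}_{n,k}$ is the sum $n + (n-1) + \cdots + (n-2k+2)$, which has $2k-1$ terms running from $n-2k+2$ up to $n$. Since it is an arithmetic progression with $2k-1$ terms, its sum equals (number of terms) $\times$ (average of first and last term) $= (2k-1)\cdot\frac{n + (n-2k+2)}{2} = (2k-1)(n-k+1)$. Dividing by $2k-1$ gives $n-k+1$, as claimed. In the second case, $k > n/2$, the sum is $n + (n-1) + \cdots + (2k-n-1)$, whose terms run from $2k-n-1$ up to $n$; the count is $n - (2k-n-1) + 1 = 2(n-k+1)$ terms... wait, that is not $2k-1$ in general, so I would instead recompute: the number of terms is $n - (2k-n-1) + 1 = 2n - 2k + 2$, and the sum is $\frac{(2n-2k+2)\,(n + 2k-n-1)}{2} = (n-k+1)(2k-1)$. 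Again dividing by $2k-1$ yields $n-k+1$. (One should double-check the boundary where $n$ is even and $k = n/2$: both formulas give the same value, so the definition is consistent there; this is worth a sentence.)

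The only genuine care needed is bookkeeping of the endpoints and the number of terms in each arithmetic progression, together with checking that in each regime $2k-1$ actually divides the product cleanly — which it does, since in both cases the sum factors as $(2k-1)(n-k+1)$. There is no real obstacle; the main thing to get right is the count of terms in the $k > n/2$ case, where the naive guess "$2k-1$ terms" is wrong and the correct count $2(n-k+1)$ conspires with the average $\frac{2k-1}{2}$ to reproduce the same product. I would present the computation as two short displayed equations, one per case, and close with the remark that Lemma~\ref{lem;1} immediately implies Theorem~\ref{thm;1} once one observes that $\binom{n+1}{2} - \binom{k}{2} = {\Diamond}_{n,k}$ and telescopes the product $\prod_{k=1}^{n}(n-k+1) = n!$.
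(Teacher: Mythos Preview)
Your proof is correct and follows exactly the same approach as the paper: split into the two cases $k\le n/2$ and $k>n/2$, evaluate each arithmetic-progression sum via (number of terms)$\times$(average of endpoints), and observe that in both cases the result factors as $(2k-1)(n-k+1)$. Your additional remarks on the boundary consistency and on deducing Theorem~\ref{thm;1} are also in line with how the paper proceeds immediately after the lemma.
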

\begin{proof}
Let calculate the sum of the number sequence. 
\begin{eqnarray*}
{}^\forall k\leq \frac{n}{2}, &&\\
{\Diamond}_{n,k}&=&n+(n-1)+\cdots +(n-2k+2)\\
&=&\frac{1}{2}(n+(n-2k+2))\times (n-(n-2k+2)+1)\\
&=&\frac{1}{2}(2n-2k+2)\times (2k-1)\\
&=&(n-k+1)\times (2k-1), 
\end{eqnarray*}
\begin{eqnarray*}
{}^\forall k> \frac{n}{2}, &&\\
{\Diamond}_{n,k}&=&n+(n-1)+\cdots +(2k-n-1)\\
&=&\frac{1}{2}(n+(2k-n-1))\times (n-(2k-n-1)+1)\\
&=&\frac{1}{2}(2k-1)\times (2n-2k+2)\\
&=&(2k-1)\times (n-k+1). 
\end{eqnarray*}
\end{proof}
We permute numerator numbers by a ``Skip" like next example. 
Then Theorem is obvious. \\
\textbf{Example. }For $n=6$, 
\[
\frac{21\cdot 20\cdot 18\cdot 15\cdot 11\cdot 6}{1\cdot 3\cdot 5\cdot 7\cdot 9\cdot 11}=\frac{6\cdot 15\cdot 20\cdot 21\cdot 18\cdot 11}{1\cdot 3\cdot 5\cdot 7\cdot 9\cdot 11}
=6\cdot 5\cdot 4\cdot 3\cdot 2\cdot 1=6!. 
\]

\bigskip
We consider the $q$-analogue of theorem 2.1. 
By the previous reduction, 
\[
\prod_{k=1}^{n}{\frac{1-q^{\binom{n}{2}-\binom{k-1}{2}}}{1-q^{2k-1}}}
=\prod_{k=1}^{n}\sum_{i=0}^{n-k}{q^{i(2k-1)}}. 
\]
This is the generating function of the odd partitions which that 
the number of $2k-1$ part is less than or equal to $n-k$.

\section{Reduced lecture hall partition theorem}
Let $n$ be a positive integer. A partition $\lambda $ of $n$ is an integer sequence 
\[
\lambda =(\lambda_1,\lambda_2,\ldots,\lambda_\ell) 
\]
satisfying $\lambda _1 \geq \lambda _2 \geq \ldots \geq \lambda _\ell >0$ and $\displaystyle \sum _{i=1}^{\ell }{\lambda _i} =n$. 
We call $\ell (\lambda):= \ell$ the length of $\lambda $, 
and each $\lambda _i$ a part of $\lambda $. 
We let ${\mathcal {P}} $ and ${\mathcal {P}}(n) $ denote the set of partitions and the set of partitions of $n$. 
For a partition $\lambda$, we let $m_i(\lambda )$ denote the multiplicity of $i$ as its part. 
$(1^{m_1(\lambda )}2^{m_2(\lambda )} \ldots  )$ is another representation of $\lambda $. 
We define addition and subtraction of partition each two pattern. 
\[
\lambda +\mu :=(\lambda _1+\mu_1, \lambda _2+\mu_2 , \ldots, ), 
\]
\[
\lambda -\mu :=(\lambda _1-\mu_1, \lambda _2-\mu_2 , \ldots, ), 
\]
\[
\lambda \sqcup \mu :=(1^{m_1(\lambda )+m_1(\mu)}2^{m_2(\lambda )+m_2(\mu)}\ldots ),
\]
\[
\lambda \setminus \mu :=(1^{m_1(\lambda )-m_1(\mu)}2^{m_2(\lambda )-m_2(\mu)}\ldots ). 
\]
\indent
A partition $\lambda =(1^{m_1}2^{m_2}3^{m_3}\ldots ) $ is said to be odd if $m_{2i}=0$ for all $i$. 
We denote $\mathcal{OP}$ and $\mathcal{OP}(n) $ the set of odd partitions and the set of odd partitions of $n$.

For positive integer $N$, 
partition $\mu$ is called lecture hall partition when $\mu$ satisfies
\[
\frac{\mu_1}{N}\geq \frac{\mu_2}{N-1}\geq \ldots \geq \frac{\mu_{\ell(\mu)}}{N-\ell(\mu)+1}. 
\]
We call $N$ the width of lecture hall. 
We denote $\mathcal{L}_N$ the set of lecture hall partitions of width $N$.

\noindent
\textbf{Example. }
$(4, 3, 1) \not \in \mathcal{L}_3$ because $\frac{4}{3}>\frac{3}{1}$. 
$(4, 3, 1)\in \mathcal{L}_4$ because $ \frac{4}{4}\leq \frac{3}{3}\leq \frac{1}{2}\leq \frac{0}{1}$.

\bigskip
A lecture hall partition $\mu = (\mu_1, \mu_2, \ldots , \mu_{\ell(\mu)})$ is said to be reduced if  
\[
{}^\forall i, (\mu_1, \mu_2, \ldots (\mu _i-(N-i+1)), \ldots , \mu_{\ell_{\mu}} )\not \in \mathcal{L}_N . 
\]
We let $\mathcal{RL}_N$ denote 
the set of reduced lecture hall partitions of width $N$.\\
\noindent
\textbf{Example. }
For $N=3$. 
\[
\mathcal{RL}_3=\{ \emptyset , (1), (2), (2,1), (3,1), (4,1))\}. 
\]
Here $(5,1)\in \mathcal{L}_N$, but $(5,1)\not \in \mathcal{RL}_N$. 
Because $\frac{5}{3}-1>\frac{1}{2}$.   

\bigskip
For positive integer $k, N, (k\leq N)$, we denote that, 
\begin{eqnarray*}
[\Diamond]_{N,k}:=\left \{ 
\begin{array}{ll}
(N, N-1, \ldots , N-2k+2) &\ \ ({}^\forall k\leq \frac{N}{2})\\
(N, N-1, \cdots , 2k-N-1) &\ \ ({}^\forall k> \frac{N}{2})
\end{array}
\right.
. 
\end{eqnarray*}
For any $[\Diamond]_{N, k}=\mu=(\mu_1, \mu_2, \ldots , \mu_\ell)$, obviously 
\[
\frac{\mu_1}{N}=\frac{\mu_2}{N-1}=\ldots =\frac{\mu_\ell}{N-\ell +1}=1
\]
Then $[\Diamond]_{N,k}$ is lecture hall partition of width $N$ but not reduced. 
\begin{prop}\label{prop;1}
For any positive integer $N$, 
\[
\sharp \mathcal{RL}_N =N!. 
\]
\end{prop}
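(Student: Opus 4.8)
The plan is to combine the classical lecture hall partition theorem of Bousquet-M\'elou and Eriksson, namely $\sum_{\mu\in\mathcal{L}_N}q^{|\mu|}=\prod_{k=1}^N(1-q^{2k-1})^{-1}$, with a weight-preserving decomposition of an arbitrary lecture hall partition into a reduced one plus a nonnegative combination of the blocks $[\Diamond]_{N,k}$. It is convenient to identify a partition $\mu$ (padded with zeros, so $\mu=(\mu_1,\dots,\mu_N)$) with its ratio sequence $u_i:=\mu_i/(N-i+1)$, and to set $u_{N+1}:=0$; then $\mathcal{L}_N$ corresponds exactly to the sequences with $u_1\ge u_2\ge\cdots\ge u_N\ge 0$ and $(N-i+1)u_i\in\Z$, and $[\Diamond]_{N,k}$ is the sequence with $u_j=1$ for $j\le \ell_k$ and $u_j=0$ otherwise, where $\ell_k\in\{1,\dots,N\}$ is its length and $k\mapsto\ell_k$ is a bijection of $\{1,\dots,N\}$ onto itself.

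First I would record the reformulation of ``reduced''. Replacing $\mu_i$ by $\mu_i-(N-i+1)$ lowers $u_i$ by $1$ and fixes the other ratios, so the result lies in $\mathcal{L}_N$ precisely when $u_i\ge 1$ and $u_i-u_{i+1}\ge 1$; since $u_{i+1}\ge 0$ the first inequality follows from the second, hence
\[
\mu\in\mathcal{RL}_N\iff u_i-u_{i+1}<1\ \text{ for every }\ i=1,\dots,N.
\]
In particular each $\rho\in\mathcal{RL}_N$ satisfies $u_i<N-i+1$, i.e. $\rho_i<(N-i+1)^2$, so $\mathcal{RL}_N$ is finite.

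The crux is the decomposition lemma: every $\mu\in\mathcal{L}_N$ is uniquely $\rho+\sum_{k=1}^N a_k[\Diamond]_{N,k}$ with $\rho\in\mathcal{RL}_N$ and $a_k\in\Z_{\ge 0}$ (addition componentwise). On ratio sequences this reads $u_j(\mu)=u_j(\rho)+B_j$ with $B_1\ge B_2\ge\cdots\ge B_N\ge 0$ integers (namely $B_j=\sum_{k:\ell_k\ge j}a_k$, so $a_k=B_{\ell_k}-B_{\ell_k+1}$), and I would build it from the bottom up by $B_N:=\lfloor u_N(\mu)\rfloor$ and $B_j:=B_{j+1}+\lfloor u_j(\mu)-u_{j+1}(\mu)\rfloor$ for $j=N-1,\dots,1$. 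Using only $u_j(\mu)\ge u_{j+1}(\mu)$ one checks $B_j\ge B_{j+1}\ge 0$, that $r_j:=u_j(\mu)-B_j$ obeys $0\le r_{j+1}\le r_j<r_{j+1}+1$ and $(N-j+1)r_j\in\Z$ (indeed $r_j-r_{j+1}$ is the fractional part of $u_j(\mu)-u_{j+1}(\mu)$), and that this $B$ is the only admissible choice, since in any such decomposition $r_j-r_{j+1}\in[0,1)$ forces $B_j-B_{j+1}=\lfloor u_j(\mu)-u_{j+1}(\mu)\rfloor$ and $B_N=\lfloor u_N(\mu)\rfloor$. This forced uniqueness is the step I expect to be the main obstacle; existence and weight-preservation are bookkeeping.

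Finally, the decomposition is a weight-preserving bijection $\mathcal{L}_N\leftrightarrow\mathcal{RL}_N\times\Z_{\ge 0}^N$ in which $[\Diamond]_{N,k}$ contributes weight ${\Diamond}_{N,k}$, so
\[
\prod_{k=1}^N\frac{1}{1-q^{2k-1}}=\sum_{\mu\in\mathcal{L}_N}q^{|\mu|}=\Bigl(\sum_{\rho\in\mathcal{RL}_N}q^{|\rho|}\Bigr)\prod_{k=1}^N\frac{1}{1-q^{{\Diamond}_{N,k}}},
\]
whence $\sum_{\rho\in\mathcal{RL}_N}q^{|\rho|}=\prod_{k=1}^N\frac{1-q^{{\Diamond}_{N,k}}}{1-q^{2k-1}}$. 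The left-hand side is a polynomial (as $\mathcal{RL}_N$ is finite), and letting $q\to1$ each factor on the right tends to ${\Diamond}_{N,k}/(2k-1)=N-k+1$ by Lemma~\ref{lem;1}; hence $\sharp\mathcal{RL}_N=\prod_{k=1}^N(N-k+1)=N!$. (Alternatively one can bypass the lecture hall theorem: choosing the ratios of a putative $\rho\in\mathcal{RL}_N$ in the order $u_N,u_{N-1},\dots,u_1$, the constraint on $u_j$ given $u_{j+1}$ is that $u_j$ lie in the length-one interval $[u_{j+1},u_{j+1}+1)$ with $(N-j+1)u_j\in\Z$, which has exactly $N-j+1$ solutions, so $\sharp\mathcal{RL}_N=\prod_{j=1}^N(N-j+1)=N!$.)
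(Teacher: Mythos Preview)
Your argument is correct, but the main route you take is a long detour compared with the paper's two-line proof. The paper simply observes that stripping the first part sends $\mathcal{RL}_N$ to $\mathcal{RL}_{N-1}$ and that, conversely, given $(\mu_2,\dots)\in\mathcal{RL}_{N-1}$ there are exactly $N$ admissible values of $\mu_1$ (the integers in the half-open interval $[Nu_2,Nu_2+N)$), so $\sharp\mathcal{RL}_N=N\cdot\sharp\mathcal{RL}_{N-1}$. Your parenthetical alternative at the very end is exactly this counting, phrased in terms of the ratios $u_j$ and built from the bottom up rather than the top down; that sentence alone is already a complete proof matching the paper's.

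Your principal route instead establishes the unique decomposition $\mathcal{L}_N\cong\mathcal{RL}_N\times\Z_{\ge 0}^N$ via the blocks $[\Diamond]_{N,k}$, combines it with the Bousquet-M\'elou--Eriksson generating function, and specializes $q\to 1$. This is valid and in fact recovers the stronger identity $\sum_{\rho\in\mathcal{RL}_N}q^{|\rho|}=\prod_k(1-q^{\Diamond_{N,k}})/(1-q^{2k-1})$ along the way, which the paper only reaches later. The trade-off is logical dependence: within the paper, the lecture hall theorem (Theorem~4.1) is deduced \emph{from} the reduced version (Theorem~\ref{thm;rl1}), whose bijective proof uses Proposition~\ref{prop;1} to match the cardinalities; so invoking Bousquet-M\'elou--Eriksson here would be circular relative to the paper's internal development, even though it is of course an independently established result. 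If you want to keep the generating-function flavor without that dependence, your decomposition lemma alone already gives $\sharp\mathcal{L}_N(n)=\sum_{a\in\Z_{\ge0}^N}\sharp\mathcal{RL}_N\bigl(n-\sum a_k\Diamond_{N,k}\bigr)$, but extracting $\sharp\mathcal{RL}_N$ from that still needs the direct count --- which is precisely your final parenthetical.
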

\begin{proof}
By the definition, 
\[
\mu =(\mu_1, \mu_2, \ldots , \mu_{\ell (\mu)} ) \in \mathcal{RL}_N 
\Rightarrow (\mu_2, \ldots , \mu_{\ell(\mu)}) \in \mathcal{RL}_{N-1} . 
\]
When $\mu_2,  \ldots , \mu _{\ell(\mu)}$ are fixed, 
there are $N$ pattern $m_1$ such that $\mu \in \mathcal{RL}_N$. 
Then it was proved by mathematical induction. 
\end{proof}
We also ready reduced odd partition. 
Let $N$ be a positive integer. 
We denote, 
\[
\mathcal{OP}_N:=\{ \lambda \in \mathcal{OP} \ |\ {}^\forall i> 2N, m_i=0\} , 
\]
\[
R\mathcal{OP}_N:=\{ \lambda \in \mathcal{OP}_N \ |\ {}^\forall k\leq N, m_{2k-1}\leq N-k \} . 
\]
We call them the set of $N$-party odd partitions, the set of $N$-party reduced odd partitions. 
The next proposition is obvious from definition. 
\begin{prop}\label{prop;2}
For any positive integer $N$, 
\[
\sharp \mathcal{OP}_N =N!. 
\]
\end{prop}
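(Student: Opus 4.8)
The plan is to count this set directly. By definition its elements are the odd partitions $\lambda$ whose parts are all at most $2N-1$ and which satisfy the reduction bound $m_{2k-1}(\lambda)\le N-k$ for every $k=1,\dots,N$; this last condition is exactly the defining condition of $R\mathcal{OP}_N$, and it is what makes the set finite (for $N=1$ the bare condition ``all parts $\le 2N-1$'' already allows infinitely many partitions), so I will work with $R\mathcal{OP}_N$. First I would note that such a $\lambda$ is completely determined by the tuple $(m_1,m_3,\dots,m_{2N-1})$ of multiplicities of its odd parts, and that $m_{2N-1}\le 0$ forces $m_{2N-1}=0$, so the parts that may occur are among $1,3,\dots,2N-3$.

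The key --- and essentially only --- observation is that the multiplicities of distinct parts are subject to no joint constraint: choosing a valid $\lambda$ amounts to choosing, independently for each $k\in\{1,\dots,N\}$, an integer $m_{2k-1}\in\{0,1,\dots,N-k\}$. As the $k$-th choice has $N-k+1$ possibilities, this yields
\[
\sharp R\mathcal{OP}_N=\prod_{k=1}^{N}(N-k+1)=N\cdot(N-1)\cdots 2\cdot 1=N!,
\]
where the all-zero tuple gives the empty partition, which is counted, just as $\emptyset\in\mathcal{RL}_N$ in the example after Proposition \ref{prop;1}.

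Alternatively, to match the style of Proposition \ref{prop;1}, one can induct on $N$: for $N=1$ the reduction forces $\lambda=\emptyset$, giving $1=1!$; for the inductive step, the map $\lambda\mapsto\lambda'$ that removes every occurrence of the part $1$ and lowers each remaining part by $2$ (that is, $m'_{2j-1}:=m_{2j+1}(\lambda)$) carries $R\mathcal{OP}_N$ onto $R\mathcal{OP}_{N-1}$, and the fibre over each image corresponds to the $N$ choices of $m_1(\lambda)\in\{0,1,\dots,N-1\}$, so $\sharp R\mathcal{OP}_N=N\cdot(N-1)!=N!$. Either way there is no genuine obstacle here --- this is the sense in which the proposition is ``obvious from definition'' --- the only thing needing attention being to keep the reduction inequalities in force; and note that the same computation $\prod_{k=1}^{n}(n-k+1)=n!$ is precisely the $q\to 1$ value of the generating function $\prod_{k=1}^{n}\sum_{i=0}^{n-k}q^{i(2k-1)}$ from the end of Section 2, in agreement with Theorem \ref{thm;1}.
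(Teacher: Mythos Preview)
Your proposal is correct and matches the paper's approach: the paper gives no proof beyond the remark ``obvious from definition,'' and your direct count $\prod_{k=1}^{N}(N-k+1)=N!$ via independent choices of the multiplicities $m_{2k-1}\in\{0,\dots,N-k\}$ is exactly the intended obvious argument. You were also right to note that the statement must be read as $\sharp R\mathcal{OP}_N=N!$ (the paper's $\mathcal{OP}_N$ is infinite, and the line immediately following the proposition, ``Then $\sharp\mathcal{RL}_N=\sharp\mathcal{ROP}_N$,'' confirms this is a typo).
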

Then $\sharp \mathcal{RL}_N =\sharp \mathcal{ROP}_N$. 
We also prove the orders of these sets are equal in every size $n$. 
\begin{thm}[Reduced Lecture Hall Partition Theorem]\label{thm;rl1}
For any positive integer $N$, 
\[
\sum_{\mu \in \mathcal{RL}_N}{q^{|\mu |}}=\sum_{\lambda \in R\mathcal{OP}_N}{q^{|\lambda |}}. 
\]
\end{thm}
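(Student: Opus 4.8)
The plan is to construct an explicit bijection between $\mathcal{RL}_N$ and $R\mathcal{OP}_N$ that is weight-preserving, i.e.\ $|\mu| = |\lambda|$, so that the two generating functions agree term by term. The key observation driving the bijection is Lemma~\ref{lem;1}: the trapezoidal partition $[\Diamond]_{N,k}$ has size ${\Diamond}_{N,k} = (2k-1)(N-k+1)$, which is exactly the quantity appearing when one inserts $N-k+1$ copies of the part $2k-1$ into an odd partition. This matches the combinatorial fact used to prove Proposition~\ref{prop;1} (each $m_1$ ranges over $N$ values) with the fact used for Proposition~\ref{prop;2} (each $m_{2k-1}$ ranges over $N-k+1$ values), suggesting the bijection should be built by induction on $N$, peeling off the first coordinate of a reduced lecture hall partition while simultaneously adjusting the multiplicity of the largest allowed odd part.

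First I would set up the inductive framework: given $\mu = (\mu_1, \mu_2, \ldots, \mu_{\ell}) \in \mathcal{RL}_N$, the proof of Proposition~\ref{prop;1} shows $(\mu_2, \ldots, \mu_\ell) \in \mathcal{RL}_{N-1}$, and that for fixed tail there are exactly $N$ valid choices of $\mu_1$, say indexed $j = 0, 1, \ldots, N-1$ in increasing order. By induction we have a weight-preserving bijection $\Phi_{N-1}\colon \mathcal{RL}_{N-1} \to R\mathcal{OP}_{N-1}$. I would then define $\Phi_N(\mu)$ to be $\Phi_{N-1}(\mu_2,\ldots,\mu_\ell)$ with $j$ copies of the part $2N-1$ adjoined (via $\sqcup$), where $j$ is the index of $\mu_1$ among its $N$ admissible values. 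Since the condition defining $R\mathcal{OP}_N$ allows $m_{2N-1} \le N - N = 0$... — here I need to recheck the indexing: the largest odd part with positive allowed multiplicity is $2\cdot 1 - 1 = 1$ with bound $N-1$, so the induction should peel off parts of size $1$, not $2N-1$; equivalently one re-indexes so that the outermost layer corresponds to $k=1$. The clean statement is: $m_1(\lambda)$ ranges over $\{0,1,\ldots,N-1\}$, matching the $N$ choices for $\mu_1$, but we must account for the weight: changing $\mu_1$ by its step size $N$ changes $|\mu|$ by $N$, whereas changing $m_1$ by one changes $|\lambda|$ by $1$. So the naive correspondence is not weight-preserving, and the trapezoidal numbers must enter to fix the discrepancy.

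The corrected plan is therefore: encode $\mu_1$ not by how many steps it sits above its minimum, but by writing $\mu_1 = \mu_1^{\min} + (\text{something})$ where the admissible values of $\mu_1$ are $\mu_1^{\min}, \mu_1^{\min}+1, \ldots$? — no; the reduced condition forces $\mu_1$ to lie in an interval of length $N$ determined by the tail, and the ``Skip'' permutation in the Example for Theorem~\ref{thm;1} is the real clue. I would mimic that Skip: map the reduced lecture hall partition by repeatedly using the identity $[\Diamond]_{N,k}$ is lecture-hall-trivial together with the subtraction/union operations $-, \sqcup$ defined in Section~3, so that subtracting an appropriate $[\Diamond]_{N,k}$ from $\mu$ and adding copies of $2k-1$ to the odd side preserves total weight by Lemma~\ref{lem;1}. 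Concretely, define $\Phi_N$ recursively by: let $m_1(\lambda) = $ (number of times $N$ ``divides into'' $\mu_1$ in the lecture hall sense), subtract the corresponding trapezoid, recurse on width $N-1$.

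The main obstacle I anticipate is verifying that this map is well-defined and bijective at each stage — specifically, showing that after removing the layer corresponding to $k=1$, what remains is genuinely in $\mathcal{RL}_{N-1}$ (the ``reduced'' condition is delicate and interacts with all coordinates), and conversely that every element of $R\mathcal{OP}_N$ arises exactly once. I would handle this by carefully characterizing, for a fixed admissible tail $(\mu_2,\ldots,\mu_\ell)$, the precise set of admissible $\mu_1$ and checking it is an arithmetic progression of length $N$ with common difference $1$ after normalization, then matching it against the $N$ values of $m_1 \in \{0,\ldots,N-1\}$ via a weight-correcting shift supplied by the trapezoidal numbers. Once the bijection is shown to preserve $|\cdot|$, summing $q^{|\cdot|}$ over both sides gives the claimed identity, and I would close by noting that this refines Proposition~\ref{prop;1} and Proposition~\ref{prop;2} since setting $q \to 1$ recovers $\sharp\mathcal{RL}_N = \sharp R\mathcal{OP}_N = N!$.
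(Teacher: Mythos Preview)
Your proposal never settles on a concrete map; it is a sequence of attempts, each abandoned when the weight bookkeeping fails, ending with a vague ``subtract the corresponding trapezoid, recurse on width $N-1$'' whose well-definedness you yourself flag as unresolved. The inductive-on-$N$ scheme has a structural obstacle you did not confront: removing $\mu_1$ from $\mu\in\mathcal{RL}_N$ indeed yields an element of $\mathcal{RL}_{N-1}$, but on the odd side there is no matching weight-preserving projection $R\mathcal{OP}_N\to R\mathcal{OP}_{N-1}$. Stripping all copies of the part $1$ from $\lambda\in R\mathcal{OP}_N$ leaves an odd partition with parts $\ge 3$ satisfying $m_{2k-1}\le N-k$; this set has the right cardinality $(N-1)!$ but the wrong weight generating function (already for $N=3$ it is $1+q^3$ versus $1+q$ for $R\mathcal{OP}_2$), so no weight-preserving identification with $R\mathcal{OP}_{N-1}$ exists. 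Hence the induction cannot close.

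The paper's argument is organized entirely differently. It does not induct on $N$. Instead it builds a map $\Phi_N:\mathcal{OP}_N\to\mathcal{L}_N$ by processing the parts of $\lambda$ from largest to smallest: each time a part $2k-1$ is adjoined, one adds to the growing lecture hall partition an increment $A_{i,k}=[\Diamond]_{k+i-1,k}-[\Diamond]_{k+i-2,k}$, where the index $i=i(\lambda)$ is chosen by a counter $I(\lambda)=[I_1,\ldots,I_N]$ that records, roughly, how far each row pair is from the next trapezoidal threshold. Two technical lemmas control this counter: one (Lemma~\ref{lem;2}) expresses $\mu_{2j-1},\mu_{2j}$ in terms of $d_j$ and $I_j$, and one (Lemma~\ref{lem;4}) shows that adjoining $N-k+1$ copies of $2k-1$ adds exactly $[\Diamond]_{N,k}$ and returns the counter to its prior state. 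From these one deduces that $\Phi_N$ carries $R\mathcal{OP}_N$ into $\mathcal{RL}_N$ and the complement into the complement, and that $\Phi_N$ is injective; since both sides have cardinality $N!$ by Propositions~\ref{prop;1} and~\ref{prop;2}, bijectivity follows. The trapezoidal identity you correctly highlighted is used, but as the ``period'' of the counter mechanism, not as the inductive step you were reaching for.
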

For proof of this theorem, we construct the map from $\mathcal{OP}_N$ to $\mathcal{L}_N$
\[
\begin{array}{ccccc}
\Phi _N: &\mathcal{OP}_N &\longrightarrow &\mathcal{L}_N&\\
& \rotatebox{90}{$\in$} & & \rotatebox{90}{$\in$} &\\
&\lambda &\longmapsto &\mu &\textrm{s.t.} |\lambda |=|\mu|
\end{array}. 
\]
And we prove this map is bijection.
We remark beforehand the map $\Phi _N$ is equivalent with \cite{Y}. 
However, the using sets become finite by reduction. 
Then the proof becomes easy. 
And we also give bijective proof of lecture hall partition theorem by $\Phi _N$.

\bigskip
Fist, let consider the case $N=\infty$, $\Phi:=\Phi_\infty$. 
For $\lambda =((2k-1)^m)$ which consists of a kind of part $2k-1$, 
we define
\[
\Phi ((2k-1)^m):= [\Diamond]_{k+m-1,k}. 
\]
And we put $A_{i,k}$  the increase from $\Phi ((2k-1)^{i-1})$ to $\Phi((2k-1)^{i})$. 
\footnote
{
$A_{i,k}$ may not be a partition. 
But we applied same calculation ``$+$", ``$-$". 
}
\[
A_{i,k}:=\Phi ((2k-1)^i)-\Phi ((2k-1)^{i-1})=[\Diamond]_{k+i-1,k}-[\Diamond]_{k+i-2,k}. 
\]
\textbf{Example. } Let $2k-1=5$, 
\[
\Phi(5^0) =[\Diamond]_{2,3}=\emptyset, 
\Phi(5) =[\Diamond]_{3,3} =(3,2), 
\Phi(5^2) =[\Diamond]_{4,3} =(4,3,2,1), 
\]
\[
\Phi(5^3) =[\Diamond]_{5,3} =(5,4,3,2,1), 
\Phi(5^4) =[\Diamond]_{6,3} =(6,5,4,3,2), \ldots . 
\]
Then 
\[
A_{1,3}=(3,2), A_{2,3}=(1,1,2,1), A_{3,3}=(1,1,1,1,1), A_{4,3}=(1,1,1,1,1), \ldots . 
\]
We grow up lecture hall partition by combination of this increase $A$. 
We define next counter $I(\lambda )=[I_1(\lambda ), I_2(\lambda ), \ldots , I_N(\lambda )]$ to select increase $A$. 
The default of counter is $I(\lambda )=[0, 0, \ldots 0]$. 
For ${}^\forall \lambda \in \mathcal{OP}_N, {}^\forall 2k-1\leq \lambda _{\ell(\lambda )}$, 
\[
\Phi _N(\lambda \sqcup (2k-1)^1):=\Phi _N(\lambda )+A_{i(\lambda ),k}. 
\]
Here, $i(\lambda ):=\textrm{min} \{ j\ | \ I_j(\lambda )=0 \}$. 
\begin{eqnarray*}
&&I(\lambda \sqcup (2k-1)^1)\\
&=&[I_1(\lambda )-1, \ldots, I_{i(\lambda )-1}(\lambda )-1, N-k-i(\lambda )+1, I_{i(\lambda )+1}(\lambda ), \ldots , I_N(\lambda )]. 
\end{eqnarray*}
\noindent
 \textbf{Example. }For $N=7$, $1^43^27^3911\in \mathcal{OP}_7 $. 
\begin{eqnarray*}
 \begin{array}{ll}
 \Phi (11)=\emptyset +A_{1,6}=(6,5),& I(11)=[1,0,0,0,0,0,0], \\ 
 \Phi (911)=(6,5)+A_{2,5}=(7,6,4,3),& I(911)=[0,1,0,0,0,0,0], \\
 \Phi (7911)=(7,6,4,3)+A_{1,4}=(11,9,4,3),& I(7911)=[3,1,0,0,0,0,0], \\
 \Phi (7^2911)=(11,9,4,3)+A_{3,4}=(12,10,5,4,2,1), &I(7^2911)=[2,0,1,0,0,0,0],\\
 \vdots ,& \vdots \\
\Phi (1^43^27^3911)=(20,13,9,6,2,1), &I(1^43^27^3911)=[1,3,2,0,1,1,0]. 
 \end{array}
 \end{eqnarray*}

\indent
We research the properties of counter $I$. 
For $\lambda \in \mathcal{OP} _N , \mu=\Phi_N (\lambda )$, 
We put $d_j(\lambda )$ the number of action $j$ while we grow up $\mu$.

\noindent
\textbf{Example. }
For $\lambda = 1^43^27^3911 \in \mathcal{OP} _7$ just before example, 
\[
d_1(\lambda )=3, d_2(\lambda )=3, d_3(\lambda )=2.
\]

\bigskip 
\begin{lem}\label{lem;2}
Let $\lambda \in \mathcal{OP} _N , \mu =\Phi_N (\lambda )$, $2k-1=\lambda _{\ell(\lambda )}$. 
Then, 
\begin{eqnarray}
{}^\forall j \leq k, \ \ \ &\mu_{2j-1}= (N-2j+2)d_j(\lambda )-I_j(\lambda )\\
{}^\forall j < k, \ \ \ &\mu_{2j}=(N-2j+1)d_j(\lambda )-I_j(\lambda ). 
\end{eqnarray}
Especially, 
\begin{eqnarray}
{}^\forall j \leq k, \ \ \ & \mu_{2j-1}\equiv -I_j(\lambda ) &({\rm{mod}}\ N-2j+2)\\
{}^\forall j < k, \ \ \ & \mu_{2j}\equiv -I_j(\lambda ) &({\rm{mod}}\ N-2j+1). 
\end{eqnarray}
\end{lem}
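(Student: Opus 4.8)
The plan is to prove the two equalities in the statement simultaneously by induction on the length $\ell(\lambda)$, following the recursive definition of $\Phi_N$; once these hold, the two congruences follow at once, by reducing the first equality modulo $N-2j+2$ and the second modulo $N-2j+1$. For the base case take $\lambda=((2k-1)^1)$, so that $\mu=\Phi_N(\lambda)=\emptyset+A_{1,k}=A_{1,k}$, the counter is $I(\lambda)=[N-k,0,\ldots,0]$, and $d_1(\lambda)=1$ with $d_j(\lambda)=0$ for $j\ge 2$. Evaluating $A_{1,k}=[\Diamond]_{k,k}-[\Diamond]_{k-1,k}=(k,k-1)$ directly from the definition and substituting, one verifies the asserted equalities by hand for the relevant $j$. (Alternatively one may start the induction at $\lambda=\emptyset$, with the convention $\mu_j=0$ for $j>\ell(\mu)$.)

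For the inductive step, write a nonempty $\lambda\in\mathcal{OP}_N$ as $\lambda=\widehat\lambda\sqcup(2k-1)^1$ with $2k-1=\lambda_{\ell(\lambda)}$; then $\widehat\lambda\in\mathcal{OP}_N$ has smallest part $2\widehat k-1$ with $\widehat k\ge k$, and by the induction hypothesis both equalities hold for $\widehat\lambda$ on the range $j\le\widehat k$, which already contains the range $j\le k$ needed for $\lambda$. Put $i:=i(\widehat\lambda)$; by minimality of $i$ we have $I_i(\widehat\lambda)=0$ while $I_1(\widehat\lambda),\ldots,I_{i-1}(\widehat\lambda)>0$. The first task is to evaluate the increment $A_{i,k}=[\Diamond]_{k+i-1,k}-[\Diamond]_{k+i-2,k}$ explicitly: deciding which branch of the definition of $[\Diamond]_{N',k}$ applies for $N'=k+i-1$ and for $N'=k+i-2$ gives $A_{i,k}=(1^{2i-2},\,k-i+1,\,k-i)$ when $k\ge i$, and $A_{i,k}=(1^{2k-1})$ when $k<i$ (the two descriptions agree at $k=i$). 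Consequently, in passing from $\widehat\lambda$ to $\lambda$ the parts $\mu_1,\ldots,\mu_{2i-2}$ each increase by $1$ and the pair $\mu_{2i-1},\mu_{2i}$ increases by $k-i+1$ and $k-i$ when $k\ge i$, whereas when $k<i$ only $\mu_1,\ldots,\mu_{2k-1}$ change and each by $1$; at the same time $I_j$ drops by $1$ for $j<i$, $I_i$ is reset to $N-k-i+1$, both $I_j$ and $d_j$ stay fixed for $j>i$, and $d_i$ increases by $1$.

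Substituting these changes into the induction hypothesis and splitting the check into the ranges $j<i$, $j=i$ and $i<j\le k$ then yields both equalities for $\lambda$: for $j<i$ the $+1$ added to $\mu_{2j-1}$ and $\mu_{2j}$ is exactly compensated by $I_j\mapsto I_j-1$; for $i<j\le k$ nothing relevant changes and the hypothesis carries over verbatim; and for $j=i$ the non-unit increments $k-i+1$ and $k-i$ cancel precisely against $d_i\mapsto d_i+1$ together with $I_i:0\mapsto N-k-i+1$, using the hypothesis in the sharper form $\mu_{2i-1}=(N-2i+2)d_i(\widehat\lambda)$, $\mu_{2i}=(N-2i+1)d_i(\widehat\lambda)$ valid because $I_i(\widehat\lambda)=0$. (When $k<i$ every index $j\le k$ falls in the range $j<i$, so that part of the argument already covers this case, the even-index identity simply not being asserted at $j=k$, where $\mu_{2k}$ is in fact left unchanged.) I expect the real obstacle to be precisely this explicit evaluation of $A_{i,k}$: one must juggle the two thresholds $k\le N'/2$ versus $k>N'/2$ for the two values of $N'$, together with the degenerate features — empty $[\Diamond]$'s, a spurious trailing $0$ that is not a genuine part, and the coincidence $k=i$ — since everything downstream hinges on the exact shape of this increment; the induction around it is then routine bookkeeping, made painless by the slack $\widehat k\ge k$ in the range of the hypothesis.
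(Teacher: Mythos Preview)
Your proof is correct and follows essentially the same strategy as the paper's: induct on the length of $\lambda$ (the paper starts at $\lambda=\emptyset$, your parenthetical alternative), peel off one copy of the smallest part, and verify the two equalities by a three-way case split according to whether $j<i$, $j=i$, or $j>i$ for $i=i(\widehat\lambda)$. The only difference is presentational: you make the shape of $A_{i,k}$ explicit as $(1^{2i-2},k-i+1,k-i)$ (respectively $(1^{2k-1})$ when $k<i$), whereas the paper simply invokes ``from definition of $A$'' to justify the same increments.
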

\begin{proof}
When default $\lambda =\mu =\emptyset, I(\emptyset )=[0,0,\ldots 0]$, 
both side equal $0$. 
\footnote
{
When $\lambda =\emptyset$, we regard $k$ as $2N-1$ (unlimited).  
}

Let prove equation inductive. 
We put $\lambda '=\lambda \setminus (2k-1), \mu'=\Phi_N (\lambda ')$.
And we assume that $\lambda '$ satisfies identity (3.1), (3.2). 
If $i(\lambda ')>j$ , 
\[d_j(\lambda)=d_j(\lambda '), I_j(\lambda )=I_j(\lambda ')-1.
\] 
From definition of $A$, 
\begin{eqnarray*}
{}^\forall j \leq k, \ \ \ &\mu_{2j-1}=\mu'_{2j-1}+1, \\
{}^\forall j < k, \ \ \ &\mu_{2j}=\mu'_{2j}+1. 
\end{eqnarray*}
If $i(\lambda ')=j$, 
\[
d_j(\lambda)=d_j(\lambda ')+1,  
I_j(\lambda )=I_j(\lambda ')+N-k-j+1, 
\] 
\[
{}^\forall j \leq k, \ \ \ \mu_{2j-1}=\mu'_{2j-1}+k-j+1, \mu_{2j}=\mu'_{2j}+k-j. 
\]
If $i(\lambda ')<j$, 
\[
d_j(\lambda )=d_j(\lambda '), I_j(\lambda )=I_j(\lambda '), \mu_{2j-1}=\mu'_{2j-1}, \mu_{2j}=\mu'_{2j}. 
\]
Then $\lambda $ satisfies equations. 
\end{proof}
\begin{lem}\label{lem;3}
Let $\lambda \in \mathcal{ROP} _N , (I, \mu) =\Phi (\lambda )$. 
For all $j$, 
\[
0\leq d_j(\lambda )-d_{j+1}(\lambda )\leq 1
\]
\end{lem}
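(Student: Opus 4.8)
The plan is to argue by induction on $\ell(\lambda)$, building $\lambda$ up one part at a time in weakly decreasing order: write $\lambda=\lambda'\sqcup(2k-1)$ with $2k-1\le\lambda'_{\ell(\lambda')}$ and put $i:=i(\lambda')$. Since adjoining a part only decreases multiplicities, $\lambda'\in\mathcal{ROP}_N$ as well, so the inductive hypothesis applies to $\lambda'$. The construction changes only the $i$-th entry, $d_i(\lambda)=d_i(\lambda')+1$ and $d_j(\lambda)=d_j(\lambda')$ for $j\ne i$, so the bound $0\le d_j(\lambda)-d_{j+1}(\lambda)\le1$ is automatic for all $j\notin\{i-1,i\}$, and for $j\in\{i-1,i\}$ it is equivalent to the pair of identities
\[
d_{i-1}(\lambda')=d_i(\lambda')+1\ \ (i\ge2),\qquad d_i(\lambda')=d_{i+1}(\lambda'). \tag{$\ast$}
\]
Thus the lemma reduces to showing that the index $i=i(\lambda')$ picked out by the counter always obeys $(\ast)$.

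To control that index I would carry along a stronger inductive hypothesis describing exactly where the counter vanishes. Viewing the (by induction nonincreasing) sequence $d_1(\lambda)\ge d_2(\lambda)\ge\cdots$ as a union of plateaux, the statement to propagate is: $I_j(\lambda)=0$ forces either $d_j(\lambda)=0$ (the index lies beyond the support of $d(\lambda)$) or $d_j(\lambda)=d_{j+1}(\lambda)\ge1$ together with $j=1$ or $d_{j-1}(\lambda)=d_j(\lambda)+1$ (the index is the left end of a plateau of length $\ge2$). Applying this to $\lambda'$ at $j=i$ — which is in particular a zero of $I(\lambda')$ — gives two cases. If $d_i(\lambda')=0$ then, $d(\lambda')$ being nonincreasing, $i$ is precisely the first index past the support, so $d_{i+1}(\lambda')=0$ and, by the step bound for $\lambda'$, $d_{i-1}(\lambda')=1$; this is $(\ast)$. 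If the second alternative holds, it is literally $(\ast)$. Either way $(\ast)$ follows, hence the lemma for $\lambda$.

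It then remains to re-derive the strengthened hypothesis for $\lambda$. The indices $j$ with $I_j(\lambda)=0$ are: (1) $j<i$ with $I_j(\lambda')=1$; (2) $j=i$ with $N-k-i+1=0$; (3) $j>i$ with $I_j(\lambda')=0$. Case (3) is pushed through from the hypothesis for $\lambda'$ via the explicit increments $A_{i,k}$, using, when $j=i+1$, that $I_i(\lambda')=I_{i+1}(\lambda')=0$ is only possible when $d_i(\lambda')=d_{i+1}(\lambda')=0$ (the two plateau-corner alternatives would conflict). Case (1) is handled similarly, together with a matching description of the indices $j<i$ at which $I(\lambda')$ equals $1$ — again plateau left ends of $d(\lambda')$, with $i-1$ the one such index that the increment at $i$ newly completes to a length-$2$ plateau, which is exactly the first identity of $(\ast)$. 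Case (2) is where the reduced hypothesis $\lambda\in\mathcal{ROP}_N$ is indispensable: one must check that whenever $i$ extends the support of $d$ (that is, $d_i(\lambda')=0$) the reset value $N-k-i+1$ is actually $\ge1$, so that no spurious zero is created at $j=i$. In that situation $i-1$ equals the size of the support and all parts of $\lambda'$ are $\ge 2k-1$; comparing the cells recorded by $d(\lambda')$ against the multiplicity ceilings $m_{2k'-1}\le N-k'$ — in the tight instance, the impossibility of $\lambda=((2k-1)^{m})$ with $m>N-k$ — forces $i\le N-k$, i.e.\ $N-k-i+1\ge1$.

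The lower half of the lemma can also be read off from Lemma~\ref{lem;2}: for every index $j$ strictly below the index of the smallest part one has $0\le I_j(\lambda)<N-2j+1$ (each reset of $I_j$ records a part with strictly larger index), whence $d_j(\lambda)=\lceil\mu_{2j}/(N-2j+1)\rceil=\lceil\mu_{2j-1}/(N-2j+2)\rceil$ with $\mu=\Phi_N(\lambda)$; since $p\mapsto\lceil\mu_p/(N-p+1)\rceil$ is nonincreasing on $\mathcal{L}_N$, this gives $d_j(\lambda)\ge d_{j+1}(\lambda)$ at once. What genuinely uses $\lambda\in\mathcal{ROP}_N$ is the upper bound $d_j(\lambda)-d_{j+1}(\lambda)\le1$, and the main obstacle of the whole argument is exactly the bookkeeping of the previous paragraph: fixing the correct strengthening of the inductive hypothesis and verifying that the reduced condition keeps every reset value positive along the construction.
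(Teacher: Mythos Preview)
Your strengthened inductive hypothesis is false. Take $N=5$ and $\lambda=(3,3,1,1)\in\mathcal{ROP}_5$ (since $m_3=2\le 3$ and $m_1=2\le 4$). Running the construction gives
\[
I(\lambda)=[0,0,1,1,0],\qquad d(\lambda)=[1,1,1,1,0].
\]
At $j=2$ we have $I_2(\lambda)=0$ and $d_2(\lambda)=1\ge 1$, yet $d_1(\lambda)=1\neq d_2(\lambda)+1$, so $j=2$ is \emph{not} the left end of its plateau. The companion claim you use in Case~(1), that $I_j(\lambda')=1$ forces a plateau left end, fails for the same reason one step earlier: for $\lambda'=(3,3,1)$ one gets $I(\lambda')=[1,1,2,0,0]$, $d(\lambda')=[1,1,1,0,0]$, and $j=2$ again refutes it. Since your derivation of $(\ast)$ and the propagation of the hypothesis both rest on this ``left-end'' characterisation, the argument does not go through as written.

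The paper avoids this by carrying a \emph{pairwise} invariant rather than a global description of the zero set: for every $j$ and every intermediate $\lambda$,
\[
d_j(\lambda)-d_{j+1}(\lambda)=0\ \Longleftrightarrow\ I_j(\lambda)\le I_{j+1}(\lambda),
\qquad
d_j(\lambda)-d_{j+1}(\lambda)=1\ \Longleftrightarrow\ I_j(\lambda)>I_{j+1}(\lambda).
\]
This is exactly strong enough to yield $(\ast)$ at the first zero $i=i(\lambda')$ (from $I_{i-1}>0=I_i\le I_{i+1}$), and it is preserved by each step because actions at positions $\le j-1$ or $\ge j+2$ shift $I_j,I_{j+1}$ equally, while an action at $j$ or $j+1$ flips the state; the reset-value bound $I_r\le N-k-r+1$ (all earlier parts have index $\ge k$) makes the two nontrivial transitions go through. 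The one place where $\lambda\in\mathcal{ROP}_N$ is needed is precisely the point you isolated --- ruling out $I_j=I_{j+1}=0$ with $d_j-d_{j+1}=1$, i.e.\ a reset value of $0$ --- and the paper flags this in its footnote. Your instinct about where the reduced hypothesis enters is right; what fails is the particular invariant you chose to encode it.
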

\begin{proof}
By the definition of $I$ and $\Phi _N$, 
first $A_{j, \bullet }$ is previous than first $A_{j+1, \bullet }$. 
When $d_j(\lambda )-d_{j+1(\lambda )}=1$, 
$I_j(\lambda )> I_{j+1}(\lambda )$. 
\footnote
{
When $\lambda \not \in \mathcal{ROP}_N$, there are case $I_j(\lambda )=I_{j+1}(\lambda)=0$. 
}
Then, next $A_{j+1, \bullet }$ is previous than $A_{j, \bullet }$. 
When $d_j(\lambda )-d_{j+1(\lambda )}=0$, 
$I_j(\lambda )\leq I_{j+1}(\lambda )$. 
Then, next $A_{j, \bullet }$ is previous than $A_{j+1, \bullet }$. 
\end{proof}
Let $\lambda \in \mathcal{OP}_N$, $\mu=\Phi _N(\lambda ) \in \mathcal{RL}_N$, $\lambda _{\ell}(\lambda )=2k-1$. 
For $l \leq k$, we put $\lambda ':=\lambda \sqcup (2l-1)$, $\mu ':=\Phi _N(\lambda ')$. 
Because $\ell(A_{\bullet ,l})\leq 2l -1$, 
$\mu_j$ equals $\mu'_j$ for all $j$ greater than $2l-1$. 
Then, 
\[
(\mu'_{2l } , \mu'_{2l +1}, \ldots , \mu'_{\ell(\mu')}) \in \mathcal {RL}_{N-(2l -1) }. 
\]
Therefore the possibility of failure of
the inequalities of ``reduced" and ``lecture hall" exists in only $\mu'_j$s $(j\leq 2l )$. 
By the Lemma \ref{lem;3} and  (3.1), (3.2) of Lemma \ref{lem;4}, 
\[
\mu '\not \in \mathcal{L}_N
\Leftrightarrow {}^\exists j<l , d_j(\lambda' ) -d_{j+1}(\lambda' )=0 \wedge I_j(\lambda ')  >I_{j+1}(\lambda '). 
\]
The right-hand side is false by the argument of the proof of Lemma \ref{lem;3}. 
Then $\Phi _N$ is map from $\mathcal{OP}_N$ to $\mathcal{L}_N$. 
And, 
\[
\mu '\not \in \mathcal{RL}_N
\Leftrightarrow {}^\exists j<l , d_j(\lambda ') -d_{j+1}(\lambda ')=1 \wedge I_j(\lambda ') \leq I_{j+1}(\lambda '). 
\]
The right-hand side is false when $\lambda '\in \mathcal{ROP}_N$. 
It follows the next proposition. 
\begin{prop}\label{prop;3}
Let $\lambda \in \mathcal{OP}_N$. Then, 
\[
\lambda \in \mathcal{ROP}_N \Longrightarrow \Phi _N(\lambda ) \in \mathcal{RL}_N. 
\]
\end{prop}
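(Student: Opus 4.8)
The plan is to induct on the number of parts $\ell(\lambda)$, peeling off the smallest part of $\lambda$ and invoking the construction of $\Phi_N$ together with Lemmas~\ref{lem;2} and~\ref{lem;3}; this reduces the statement to exactly the case analysis carried out in the paragraph preceding the proposition. For the base case $\lambda=\emptyset$ there is nothing to prove, since $\Phi_N(\emptyset)=\emptyset\in\mathcal{RL}_N$.

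For the inductive step, let $\lambda'\in\mathcal{ROP}_N$ be nonempty with smallest part $2l-1=\lambda'_{\ell(\lambda')}$, and put $\lambda:=\lambda'\setminus(2l-1)$, so that $\lambda'=\lambda\sqcup(2l-1)$ with $2l-1\le\lambda_{\ell(\lambda)}$ (this last condition being vacuous when $\lambda=\emptyset$). Deleting one part only decreases multiplicities, so $\lambda\in\mathcal{OP}_N$ still satisfies $m_{2k-1}\le N-k$ for all $k\le N$, i.e.\ $\lambda\in\mathcal{ROP}_N$, and $\lambda$ has one fewer part; the induction hypothesis thus gives $\mu:=\Phi_N(\lambda)\in\mathcal{RL}_N$. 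Now $\mu':=\Phi_N(\lambda')=\mu+A_{i(\lambda),l}$, and since $\ell(A_{\bullet,l})\le 2l-1$ the partitions $\mu$ and $\mu'$ agree in every coordinate of index $>2l-1$; hence the tail $(\mu'_{2l},\mu'_{2l+1},\ldots)$ is a tail of $\mu$ and lies in $\mathcal{RL}_{N-(2l-1)}$, so a violation of either the lecture-hall inequalities or the ``reduced'' condition for $\mu'$ can occur only among the coordinates $\mu'_j$ with $j\le 2l$.

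To finish one checks those finitely many coordinates using the explicit formulas $\mu_{2j-1}=(N-2j+2)d_j(\lambda)-I_j(\lambda)$, $\mu_{2j}=(N-2j+1)d_j(\lambda)-I_j(\lambda)$ of Lemma~\ref{lem;2} together with the bound $0\le d_j(\lambda')-d_{j+1}(\lambda')\le 1$ of Lemma~\ref{lem;3}. A direct substitution reduces the two things to be ruled out to
\[
\mu'\notin\mathcal{L}_N\iff{}^\exists j<l,\ d_j(\lambda')=d_{j+1}(\lambda')\ \wedge\ I_j(\lambda')>I_{j+1}(\lambda'),
\]
\[
\mu'\notin\mathcal{RL}_N\iff{}^\exists j<l,\ d_j(\lambda')-d_{j+1}(\lambda')=1\ \wedge\ I_j(\lambda')\le I_{j+1}(\lambda').
\]
The first right-hand side never holds: by the ordering-of-actions argument in the proof of Lemma~\ref{lem;3}, equal consecutive $d$'s force $I_j(\lambda')\le I_{j+1}(\lambda')$, so $\mu'\in\mathcal{L}_N$. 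The second right-hand side never holds when $\lambda'\in\mathcal{ROP}_N$: the same argument shows $d_j(\lambda')-d_{j+1}(\lambda')=1$ forces $I_j(\lambda')>I_{j+1}(\lambda')$ except in the degenerate situation $I_j(\lambda')=I_{j+1}(\lambda')=0$, which is precisely what the reducedness hypothesis $m_{2k-1}\le N-k$ rules out. Hence $\mu'\in\mathcal{RL}_N$, and the induction closes.

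The part I expect to carry the real weight is the bookkeeping already packaged in Lemmas~\ref{lem;2} and~\ref{lem;3}: tracking how each elementary step of the construction changes the pair $(\mu,I)$, and, above all, converting the multiplicity bound defining $\mathcal{ROP}_N$ into the strict inequality $I_j>I_{j+1}$ on the counter. Given those lemmas, the only genuinely new point here is that deleting the smallest part of $\lambda'$ is the inverse of one construction step, so the counter and the statistics $d_j$ restrict consistently from $\lambda'$ to $\lambda$; everything else is merely assembling the pieces by induction on $\ell(\lambda)$.
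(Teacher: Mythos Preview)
Your proof is correct and follows essentially the same route as the paper: the argument preceding the proposition is an (implicit) induction on $\ell(\lambda)$ in which one adjoins the smallest part $2l-1$, observes that $\mu$ and $\mu'$ agree beyond index $2l-1$, and then uses Lemmas~\ref{lem;2} and~\ref{lem;3} to reduce the lecture-hall and reduced conditions to the two equivalences you wrote down. Your write-up is more explicit about the induction and about why the degenerate case $I_j(\lambda')=I_{j+1}(\lambda')=0$ is excluded by reducedness (the paper relegates this to a footnote in Lemma~\ref{lem;3}), but the substance is identical.
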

\begin{lem}\label{lem;4}
Let $\lambda \in \mathcal{OP}_N, 2k-1 \leq \lambda _{\ell(\lambda )}$. Then, 
\[
\Phi _N (\lambda \sqcup (2k-1)^{N-k+1})=\Phi _N(\lambda )+[\Diamond ]_{N,k}, 
\]
\[
I(\lambda \sqcup (2k-1)^{N-k+1}) =I(\lambda ). 
\] 
\end{lem}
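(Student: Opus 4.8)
The plan is to reduce both identities to one combinatorial fact about the evolution of the counter $I$ under the appending of $N-k+1$ copies of a part $2k-1$. Write $M:=N-k+1$ and $2k_0-1:=\lambda_{\ell(\lambda)}$, so $k\le k_0$. First I would record the shape of $I(\lambda)$: since every part of $\lambda$ is $\ge 2k_0-1$, each action in the build-up of $\Phi_N(\lambda)$ resets the chosen slot $j$ to a value $\le N-k_0-j+1$ and afterwards that entry can only decrease, so a short induction on $\ell(\lambda)$ gives $0\le I_j(\lambda)\le M-j$ for $1\le j<M$ and $I_j(\lambda)=0$ for $j\ge M$. Thus $I(\lambda)$ lies in the finite set $\mathcal{C}_M:=\{c\in\Z_{\ge0}^N:\ 0\le c_j\le M-j\ \text{for }1\le j<M,\ c_j=0\ \text{for }j\ge M\}$, which has $M!$ elements.

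The heart of the proof is the following ``cycle lemma'': for every $c\in\mathcal{C}_M$, running $M$ successive ``append $2k-1$'' steps from the counter $c$ acts on each of the slots $1,2,\dots,M$ exactly once and restores the counter to $c$. The idea is to match $c$ with the permutation $\sigma$ of $\{1,\dots,M\}$ whose Lehmer code it is, i.e.\ $c_j=\#\{i>j:\sigma(i)<\sigma(j)\}$ — a bijection between $\mathcal{C}_M$ and the set of such permutations precisely because $0\le c_j\le M-j$ — and to show that the run fires slot $j$ at time $\sigma(j)$. For that it suffices to check, at each time $t=\sigma(j)$, that $j$ is the least slot whose entry is then $0$: by the code identity the entry of $j$ equals $c_j$ minus the number of slots $>j$ fired strictly before $t$, hence $c_j-c_j=0$, while for $l<j$ a short case split (on whether $l$ has already fired, using $0\le c_l\le M-l$ and the fact that slot $j$ has not yet fired) keeps the entry of $l$ strictly positive. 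Granting this, each of $1,\dots,M$ fires once; firing slot $j$ resets its entry to $M-j$, after which it drops by $1$ at each later fire of a slot $>j$, and since there are $M-j$ such fires in all, of which $c_j$ precede slot $j$'s turn, the entry of $j$ ends at $(M-j)-\bigl((M-j)-c_j\bigr)=c_j$, while entries of slots $>M$ are never touched. (Alternatively one can show that ``append $2k-1$'' is a bijection of $\mathcal{C}_M$ whose inverse recognizes the last-fired slot as the least $j$ with $c_j=M-j$, together with the fact that any return to the start must use a multiple of $M$ steps; but the Lehmer-code picture pins the period to $M$ directly.)

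It remains to deduce the lemma. By the cycle lemma, appending $(2k-1)^{M}$ fixes $I$ — the second identity — and raises $d_j$ by $1$ for $1\le j\le M$ and by $0$ for $j>M$. Write $\mu:=\Phi_N(\lambda)$, $\mu'':=\Phi_N(\lambda\sqcup(2k-1)^{M})$. Positions $>2k-1$ are unchanged because every $A_{\bullet,k}$ has length $\le 2k-1$. For $j\le k$ (hence $j\le k_0$), Lemma~\ref{lem;2} applied to $\lambda$ and to $\lambda\sqcup(2k-1)^{M}$ gives $\mu''_{2j-1}-\mu_{2j-1}=(N-2j+2)\bigl(d_j(\lambda\sqcup(2k-1)^{M})-d_j(\lambda)\bigr)$, and similarly for $\mu_{2j}$ when $j<k$, the $I_j$-terms cancelling. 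Since the $i$-th entry of $[\Diamond]_{N,k}$ equals $N-i+1$ and is present up to position $2k-1$ when $k\le N/2$ (in which case $M>k$, so every $d_j$ with $j\le k$ indeed jumps) and up to position $2M=2(N-k+1)$ when $k>N/2$, matching $d_j(\lambda\sqcup(2k-1)^{M})-d_j(\lambda)\in\{0,1\}$ against these entries yields $\Phi_N(\lambda\sqcup(2k-1)^{M})-\Phi_N(\lambda)=[\Diamond]_{N,k}$ in every position, which is the first identity.

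I expect the cycle lemma to be the main obstacle — specifically, the verification that the Lehmer-code schedule is exactly the one produced by the rule ``act on the least slot with entry $0$''. A possible alternative is induction on $\sum_j I_j(\lambda)$, undoing one step at a time (the reverse step picks a slot $i$ with $c_i=M-i$, sets $c_i:=0$, and increments $c_1,\dots,c_{i-1}$), but ensuring that a legal reverse step strictly decreases $\sum_j c_j$ is itself delicate, so I would favour the permutation argument.
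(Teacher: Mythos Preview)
Your argument is correct and rests on the same key fact as the paper's proof: during the $M:=N-k+1$ successive appendings of $2k-1$, each of the slots $1,\dots,M$ fires exactly once, and the counter returns to its initial value. Where you differ is in how this ``cycle lemma'' is established and how the first identity is then extracted.

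For the cycle lemma, the paper uses a short iterated pigeonhole: since $I_1(\lambda)\le N-k$ and every step with $i\ne 1$ decrements $I_1$, slot $1$ must fire at some step $a_1$, and after firing it is reset to $N-k$, hence stays positive for the remaining $\le N-k$ steps, so it fires only once; then the same reasoning applied to slot $2$ (ignoring step $a_1$), and so on. Your Lehmer-code argument is more elaborate but also yields more: it pins down the firing schedule explicitly as the permutation $\sigma$ whose inversion table is $I(\lambda)$. That extra information is not needed for the lemma, but it is a pleasant structural observation and makes the proof self-checking.

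For the identity $\Phi_N(\lambda\sqcup(2k-1)^{M})=\Phi_N(\lambda)+[\Diamond]_{N,k}$, your detour through Lemma~\ref{lem;2} and the case split on $k\le N/2$ versus $k>N/2$ is avoidable. By the definition of $\Phi_N$, the increment when slot $j$ fires is precisely $A_{j,k}$, so once each slot $1,\dots,M$ fires exactly once the total increment is $\sum_{j=1}^{M}A_{j,k}$, which telescopes to $[\Diamond]_{k+M-1,k}=[\Diamond]_{N,k}$. This is the paper's route and it sidesteps the positional bookkeeping entirely.
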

\begin{proof}
For any $a\leq N-k+1$, we put $\lambda ^{(a)}:= \lambda \sqcup (2k-1)^a$. 
Because the smallest part of $\lambda ^{(a)} $ is $2k-1$, 
\[
I_{N-k+1}(\lambda ^{(a)})=I_{N-k+2}(\lambda ^{(a)})=\ldots =I_N(\lambda ^{(a)})=0. 
\]
Then $i(\lambda ^{(a)})\leq N-k$. 
We prove that 
\[
\{ i(\lambda ), i(\lambda ^{(1)}), \ldots , i(\lambda ^{(N-k)})\} = \{ 1,2, \ldots , N-k+1\} .
\] 
Then, 
\[
\Phi _N(\lambda \sqcup (2k-1)^{N-k+1})=\Phi _N(\lambda )+\sum_{a=0}^{N-k}{A_{i(\lambda ^{a}), k}}
=\sum_{j=1}^{N-k+1}{A_{j,k}}
=\Phi _N(\lambda )+[\Diamond ]_{N,k}. 
\]
\indent
First ,we assume that $i(\lambda ^{(a)})$ is not equal to $1$ for all $a$. 
Because $I_1(\lambda )$ is less than $N-k$ and $I_1(\lambda ^{(a+1)})= I_1(\lambda ^{(a)})-1$, 
$I_1(\lambda ^{(N-k+1)})<0$. 
It is incompatible. 
We put $i(\lambda ^{(a_1)})=1$. 
Then $I_1(\lambda ^{(a_1+1)})$ equals $N-k$. 
For all $a$ bigger than $a_1$, $I_1(\lambda ^{(a)}) >0$. 
Therefore the $a$ that $i(\lambda ^{(a)})=1$ is only $a_1$.  \\
\indent
Next, we assume that $i(\lambda ^{(a)})$ is not equal to $2$ for all $a$. 
Because $I_2(\lambda )$ is less than $N-k-1$ 
and $I_2(\lambda ^{(a+1)})=I_2(\lambda ^{(a)})-1$ for all $a\not =a_1$, 
$I_2(\lambda ^{(N-k+1)})<0$. 
It is compatible too. 
We put $i(\lambda ^{(a_2)})=2$. 
Then $I_2(\lambda ^{(a_1+1)})$ equals $N-k-1$. 
For all $a$ bigger than $a_2$, $I_2(\lambda ^{(a)}) >0$. 
Therefore the $a$ that $i(\lambda ^{(a)})=2$ is only $a_2$.  
\[
\vdots
\]
For all $j$ less than $N-k+1$, 
the $a$ that $i(\lambda ^{(a)})=j$ is only $a_j$.
And we recall that $i(\lambda ^{(a)})\leq N-k+1$. 
Then the last one action is $A_{N-k+1,k}$.

Let fix $l\leq N-k+1$. 
\begin{eqnarray*}
I_l(\lambda ^{(a_j+1)})=
\left \{
\begin{array}{ll}
I_l(\lambda ^{(a_j)}) &(j<l)\\
I_l(\lambda ^{(a_j)})+N-k-l+1 &(j=l)\\
I_l(\lambda ^{(a_j)})-1 &(j>l)
\end{array}
\right .
.
\end{eqnarray*}
Then, 
\[
I_l(\lambda ^{(N-k+1)})=I_l(\lambda )+(N-k-l+1)-1\times (N-k-l+1)=I_l(\lambda ). 
\]
\end{proof}
\begin{prop}\label{prop;4}
Let $\lambda \in \mathcal{OP}_N$. Then, 
\[
\Phi_N (\lambda \sqcup (2k-1)^{N-k+1})=\Phi_N (\lambda )+[\Diamond ]_{N,k}. 
\]
Therefore, 
\[
\lambda \not \in \mathcal{ROP}_N\Longrightarrow \Phi _N(\lambda ) \not \in \mathcal{RL}_N. 
\]
\end{prop}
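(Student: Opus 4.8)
The plan is to establish the displayed identity for an arbitrary $\lambda\in\mathcal{OP}_N$ — Lemma~\ref{lem;4} already proves it under the extra hypothesis $2k-1\le\lambda_{\ell(\lambda)}$ — and then to read off the implication. The mechanism is a ``locality'' feature of the recursion defining $\Phi_N$: when a part $2k-1$ is appended, the increment $A_{i,k}$ added to $\Phi_N$ and the resulting update of the counter $I$ depend only on the current counter and on $k$, never on the current value of $\Phi_N$. Hence if two growth processes reach equal counters and partitions differing by a fixed shift, appending the same further sequence of parts keeps the counters equal and the shift unchanged.

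First I would write $\lambda=\lambda^{>}\sqcup(2k-1)^{m}\sqcup\lambda^{<}$, where $m=m_{2k-1}(\lambda)$, the parts of $\lambda^{>}$ are all $>2k-1$, and those of $\lambda^{<}$ all $<2k-1$, so that $\lambda\sqcup(2k-1)^{N-k+1}=\lambda^{>}\sqcup(2k-1)^{m+N-k+1}\sqcup\lambda^{<}$. Because $\Phi_N$ is defined by appending parts in weakly decreasing order (largest first), $\Phi_N(\lambda)$ is obtained by growing $\Phi_N(\lambda^{>}\sqcup(2k-1)^{m})$ and then appending $\lambda^{<}$, while $\Phi_N(\lambda\sqcup(2k-1)^{N-k+1})$ is obtained by growing $\Phi_N(\lambda^{>}\sqcup(2k-1)^{m})$, appending $N-k+1$ further copies of $2k-1$, and then appending $\lambda^{<}$. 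Now $\lambda^{>}\sqcup(2k-1)^{m}$ has smallest part $\ge 2k-1$ (or is empty, covered by the convention fixed earlier), so Lemma~\ref{lem;4} applies to the middle step: appending those $N-k+1$ copies adds exactly $[\Diamond]_{N,k}$ to the partition and leaves the counter unchanged. By the locality observation, appending $\lambda^{<}$ afterwards then acts identically on the two (equal) counters and adds the same partition to both, so their difference stays $[\Diamond]_{N,k}$; this is the claimed identity, and it also yields $I(\lambda\sqcup(2k-1)^{N-k+1})=I(\lambda)$.

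For the implication, assume $\lambda\notin\mathcal{ROP}_N$, so $m_{2k-1}(\lambda)\ge N-k+1$ for some $k\le N$. Put $\lambda'':=\lambda\setminus(2k-1)^{N-k+1}\in\mathcal{OP}_N$ and $\mu'':=\Phi_N(\lambda'')$. The identity gives $\Phi_N(\lambda)=\mu''+[\Diamond]_{N,k}$, which lies in $\mathcal{L}_N$ since $\Phi_N$ has already been shown to map $\mathcal{OP}_N$ into $\mathcal{L}_N$. Let $\ell=\ell([\Diamond]_{N,k})$; by the computation recorded just before Proposition~\ref{prop;1}, the $\ell$-th part of $[\Diamond]_{N,k}$ equals $N-\ell+1$. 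Subtracting $N-\ell+1$ from the $\ell$-th part of $\Phi_N(\lambda)$ thus gives $\mu''+[\Diamond]'$, where $[\Diamond]'=(N,N-1,\dots,N-\ell+2)$ is the staircase with its last entry deleted. Adding $[\Diamond]'$ to $\mu''$ raises the first $\ell-1$ of the ratios $\mu_i/(N-i+1)$ each by $1$ and fixes the rest, so the weakly decreasing chain defining $\mathcal{L}_N$ is preserved and $\mu''+[\Diamond]'\in\mathcal{L}_N$. Therefore the reducedness inequality fails at $i=\ell$, i.e. $\Phi_N(\lambda)\notin\mathcal{RL}_N$.

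The step I expect to be the main obstacle is making the locality argument of the second paragraph fully rigorous: one must verify that $\Phi_N$ of a concatenation really equals ``grow the prefix, then continue from the resulting (partition, counter) pair'', that the map carrying a (counter, appended part) to (increment, new counter) truly ignores the accumulated partition, and that the degenerate situations ($\lambda^{>}$ or $\lambda^{<}$ empty, and the case $2k=N+1$ in which the last entry of $[\Diamond]_{N,k}$ is $0$) are absorbed by the conventions already in force. The check in the last paragraph that the truncated staircase keeps $\mu''$ inside $\mathcal{L}_N$ is routine and I would not belabour it.
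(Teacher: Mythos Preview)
Your proposal is correct and follows the same line as the paper. The paper's own proof is essentially the single sentence ``By Lemma~\ref{lem;4}, $2k-1\le\lambda_{\ell(\lambda)}\Rightarrow I(\lambda\sqcup(2k-1)^{N-k+1})=I(\lambda)$; then the growths after that are not changed'', which is exactly your locality argument with the decomposition $\lambda=\lambda^{>}\sqcup(2k-1)^{m}\sqcup\lambda^{<}$ left implicit; your explicit verification of the ``Therefore'' clause via $i=\ell([\Diamond]_{N,k})$ supplies a detail the paper omits entirely.
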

\begin{proof}
By the Lemma \ref{lem;4}, 
\[
2k-1 \leq \lambda _{\ell(\lambda )}
\Rightarrow 
I(\lambda \sqcup (2k-1)^{N-k+1}) =I(\lambda ). 
\] 
Then the growths after that are not change. 
\end{proof}
\begin{lem}\label{lem;5}
Let $\lambda , \mu \in R\mathcal {OP}_N $, $\Phi_N(\lambda )= \Phi _N(\mu )$, 
$2k-1=(\lambda \sqcup \mu )_{\ell (\lambda \sqcup \mu)}$. 
Then, 
\[
{}^\forall l\leq k , 
\Phi _N(\lambda \sqcup (2l-1))=\Phi _N(\mu \sqcup (2l-1)).
\] 
\end{lem}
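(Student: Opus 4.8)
The plan is to reduce everything to a statement about the increments $A_{i,l}$. By definition, $\Phi_N(\lambda\sqcup(2l-1))=\Phi_N(\lambda)+A_{i(\lambda),l}$ and $\Phi_N(\mu\sqcup(2l-1))=\Phi_N(\mu)+A_{i(\mu),l}$ for every $l\le k$ (the hypothesis $2k-1=(\lambda\sqcup\mu)_{\ell(\lambda\sqcup\mu)}$ guarantees $2l-1\le\lambda_{\ell(\lambda)}$ and $2l-1\le\mu_{\ell(\mu)}$, so both $\sqcup$-operations are legal steps in the construction of $\Phi_N$). Writing $\nu:=\Phi_N(\lambda)=\Phi_N(\mu)$, it therefore suffices to show, for each fixed $l\le k$, that $A_{i(\lambda),l}=A_{i(\mu),l}$. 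If one of $\lambda,\mu$ is empty then both are (since $\Phi_N$ preserves weight and each $A_{i,l}$ has weight $2l-1>0$) and the claim is trivial, so from now on assume both are nonempty.

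First I would record the elementary stabilization property of the increments: for every $i\ge l$ one has $A_{i,l}=(1^{2l-1})$, and in particular $A_{i,l}$ does not depend on $i$ once $i\ge l$. This is checked directly from $A_{i,l}=[\Diamond]_{l+i-1,l}-[\Diamond]_{l+i-2,l}$ and the definition of $[\Diamond]_{\cdot,l}$: when $i\ge l+2$ both trapezoids lie in the first branch of that definition and their termwise difference is $(1^{2l-1})$, while the two boundary values $i=l$ and $i=l+1$ (where one of the trapezoids falls in the second branch) must be handled separately and each again equals $(1^{2l-1})$. This is the only place in the argument that requires an actual computation, and I expect it to be the main, if minor, obstacle.

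Next I would locate the first zeros of the counters using Lemma \ref{lem;2}. Since the smallest parts of $\lambda$ and $\mu$ are both $\ge 2k-1$, Lemma \ref{lem;2} supplies identities (3.1) and (3.2) for both $\lambda$ and $\mu$ at every index $j<k$; subtracting (3.2) from (3.1) gives $d_j(\lambda)=\nu_{2j-1}-\nu_{2j}=d_j(\mu)$, and then (3.1) gives $I_j(\lambda)=(N-2j+2)d_j(\lambda)-\nu_{2j-1}=I_j(\mu)$, for all $j<k$. Now if $i(\lambda)=i(\mu)$ the desired equality $A_{i(\lambda),l}=A_{i(\mu),l}$ is immediate. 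Otherwise, by symmetry assume $i(\lambda)<i(\mu)$ and set $p:=i(\lambda)$, so that $I_p(\lambda)=0$ while $I_p(\mu)\ne 0$ because $p<i(\mu)=\min\{j:I_j(\mu)=0\}$. Since $I_j(\lambda)=I_j(\mu)$ for all $j<k$, this forces $p\ge k$, hence $i(\lambda)=p\ge k\ge l$ and $i(\mu)>p\ge l$; both first-zero indices are thus $\ge l$, and the stabilization property of the previous paragraph yields $A_{i(\lambda),l}=(1^{2l-1})=A_{i(\mu),l}$, which completes the proof.
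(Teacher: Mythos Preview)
Your proof is correct and follows essentially the same route as the paper's: establish that the counters $I_j(\lambda)$ and $I_j(\mu)$ agree for small $j$, and then argue that either the first zeros $i(\lambda),i(\mu)$ coincide (so the increments match trivially) or both lie at or beyond $l$, where the increments stabilize to $(1^{2l-1})$. Your write-up is somewhat more explicit than the paper's, in two respects worth noting. First, you spell out and verify the stabilization $A_{i,l}=(1^{2l-1})$ for $i\ge l$, which the paper only gestures at with ``from definition of $A$''. Second, you derive $I_j(\lambda)=I_j(\mu)$ (for $j<k$) by subtracting (3.2) from (3.1) to recover $d_j$ and then back-substituting, whereas the paper simply cites (3.1) to claim the equality for $j\le k$; your argument is the cleaner justification here, since (3.1) alone only gives a congruence modulo $N-2j+2$.
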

\begin{proof}
By 
$\Phi_N(\lambda )= \Phi _N(\mu )$ and (3.1), 
\[
{}^\forall j\leq k, I_j(\lambda )=I_j(\mu ). 
\] 
Then, 
\[
i(\lambda )\leq k \vee i(\mu) \leq k \Rightarrow i(\lambda )=i(\mu) \Rightarrow A_{i(\lambda ),l}=A_{i(\mu), l}. 
\]
From definition of $A$, 
\[
i(\lambda )> k \wedge  i(\mu) > k \Rightarrow A_{i(\lambda ),l}=A_{i(\mu), l}. 
\]
Then, 
\[
\Phi _N(\lambda \sqcup (2l-1))=\Phi _N(\mu \sqcup (2l-1))=\Phi _N(\lambda )+A_{i(\lambda ),l}. 
\] 
\end{proof}
\begin{prop}\label{prop;5}
$\Phi _N$ is injection. 
\end{prop}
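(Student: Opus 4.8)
The plan is to prove, by induction on $|\lambda|$, that $\Phi_N(\lambda)=\Phi_N(\mu)$ forces $\lambda=\mu$. This induction is available because $\Phi_N$ preserves size: the elementary step $\lambda\mapsto\lambda\sqcup(2k-1)$ adds $A_{i,k}=[\Diamond]_{k+i-1,k}-[\Diamond]_{k+i-2,k}$, whose total equals $2k-1$ (it is $\Diamond_{k,k}-0$ when $i=1$, and $(2k-1)i-(2k-1)(i-1)$ for $i\ge2$, by Lemma \ref{lem;1}). The base case $|\lambda|=0$ is immediate: then $\Phi_N(\mu)=\emptyset$ forces $|\mu|=0$, so $\mu=\emptyset=\lambda$. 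By stripping full blocks $(2k-1)^{N-k+1}$ off the smallest parts of both sides — which by Proposition \ref{prop;4} simply subtracts the fixed partition $[\Diamond]_{N,k}$ from the image and leaves the counter unchanged — we may moreover assume $\lambda,\mu\in R\mathcal{OP}_N$ throughout.

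For the inductive step, let $\Phi_N(\lambda)=\Phi_N(\mu)=:\nu\ne\emptyset$ with $\lambda,\mu\in R\mathcal{OP}_N$. First I would show that $\lambda$ and $\mu$ have the same smallest part. Suppose $2k-1=\lambda_{\ell(\lambda)}$ is the smaller one. By the congruences (3.3), (3.4) of Lemma \ref{lem;2} applied to $\lambda$ and to $\mu$ (valid for $j\le k$ in both cases, since $k$ does not exceed the smallest-part index of $\mu$), we obtain $I_j(\lambda)\equiv-\nu_{2j-1}\equiv I_j(\mu)$ modulo $N-2j+2$ for all $j\le k$; since the counter entries of a reduced partition satisfy $0\le I_j<N-2j+2$, this forces $I_j(\lambda)=I_j(\mu)$, and then (3.1) forces $d_j(\lambda)=d_j(\mu)$, for all $j\le k$. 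If the smallest part of $\mu$ were strictly larger than $2k-1$, then $j=k$ would lie strictly below the smallest-part index of $\mu$, so (3.2) would also hold for $\mu$ at $j=k$, fixing $\nu_{2k}=(N-2k+1)d_k(\mu)-I_k(\mu)$; on the other hand, continuing to grow $\Phi_N(\lambda)$ by one further copy of $2k-1$ alters $\nu_{2k}$ (the relevant $d_k$ increases because $2k-1$ really occurs in $\lambda$), and this contradiction shows the two smallest parts agree, say both equal $2k-1$.

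Now write $\lambda=\lambda'\sqcup(2k-1)$ and $\mu=\mu'\sqcup(2k-1)$ with $\lambda',\mu'\in R\mathcal{OP}_N$, so that $\nu=\Phi_N(\lambda')+A_{i(\lambda'),k}=\Phi_N(\mu')+A_{i(\mu'),k}$. By the previous step $I(\lambda)$ and $I(\mu)$ agree on their first $k$ entries, and as in the proof of Lemma \ref{lem;4} they vanish on the entries indexed $N-k+1,\dots,N$; inverting the update rule for the counter, using the reducedness bounds on its entries, then forces $i(\lambda')=i(\mu')$, hence $A_{i(\lambda'),k}=A_{i(\mu'),k}$ and $\Phi_N(\lambda')=\Phi_N(\mu')$. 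Since $|\lambda'|<|\lambda|$, the inductive hypothesis yields $\lambda'=\mu'$, whence $\lambda=\mu$. This final step is Lemma \ref{lem;5} read backwards.

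The main obstacle I expect is the first step of the inductive argument — forcing the smallest parts to coincide — together with the careful bookkeeping on the admissible ranges of the counter entries $I_j$ and on the indices where they vanish; this is exactly where the ``reduced'' hypothesis is used and where one must exclude the possibility that the growth of $\Phi_N(\mu)$ ``starts later'' than that of $\Phi_N(\lambda)$. Once the two counters are identified, the peeling step is routine.
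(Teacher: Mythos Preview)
Your inductive peeling strategy differs from the paper's, and the step you rightly flag as the main obstacle is where it breaks. The assertion that ``continuing to grow $\Phi_N(\lambda)$ by one further copy of $2k-1$ alters $\nu_{2k}$'' is false: each increment $A_{i,k}=[\Diamond]_{k+i-1,k}-[\Diamond]_{k+i-2,k}$ has at most $2k-1$ nonzero coordinates, so the $2k$-th entry of $\Phi_N$ is untouched when copies of $2k-1$ are appended. The parenthetical justification (``the relevant $d_k$ increases'') does not rescue it: even when position $k$ fires and $d_k$ goes up by $1$, the counter $I_k$ simultaneously jumps from $0$ to $N-2k+1$, so the right-hand side $(N-2k+1)d_k-I_k$ of (3.2) is unchanged; and when a position $i<k$ fires, neither $d_k$ nor $I_k$ moves at all. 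Hence the equalities $d_k(\lambda)=d_k(\mu)$, $I_k(\lambda)=I_k(\mu)$ together with (3.2) for $\mu$ yield no contradiction against $\lambda$, and you have not shown the smallest parts must coincide. Without that, the peeling cannot begin.

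The paper avoids this difficulty by never matching smallest parts or peeling single copies. It lets $2k-1$ be the smallest part of $\lambda\sqcup\mu$ and argues with multiplicities: when $m_{2k-1}(\lambda)=m_{2k-1}(\mu)$ it removes all copies of $2k-1$ from both sides at once by padding up to a full block and subtracting $[\Diamond]_{N,k}$ (Lemma~\ref{lem;4} together with Lemma~\ref{lem;5}); once the multiplicities differ, it pads both partitions by the same number of further copies of $2k-1$ (Lemma~\ref{lem;5} keeps the images equal) until exactly one of them acquires $N-k+1$ copies and leaves $\mathcal{ROP}_N$. Propositions~\ref{prop;3} and~\ref{prop;4} then place the two equal images on opposite sides of $\mathcal{RL}_N$, a contradiction. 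Your block-stripping reduction and the counter identities from Lemma~\ref{lem;2} are exactly the right ingredients; the missing move is to pad upward and invoke the reduced/non-reduced dichotomy rather than to peel downward one part at a time.
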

\begin{proof}
Let $\lambda , \mu  \in \mathcal{OP}_N, \lambda \not = \mu$.
We assume that $\Phi _N(\lambda )=\Phi_N(\mu)$. 
And we put $2k-1=(\lambda \sqcup \mu )_{\ell (\lambda \sqcup \mu)}$. 
If $m_{2k-1}(\lambda )=m_{2k-1}(\mu )=:m_{2k-1}$, 
we transform $\lambda, \mu$ as
\begin{eqnarray*}
\lambda \mapsto \lambda ':= \lambda \setminus (2k-1)^{m_{2k-1}}, \\ 
\mu \mapsto \mu ':= \mu \setminus (2k-1)^{m_{2k-1}}.
\end{eqnarray*}
Then, 
\begin{eqnarray*}
\Phi _N(\lambda ')&=&\Phi _N(\lambda \sqcup (2k-1 )^{N-k-m_{2k-1}+1})-[\Diamond ]_{N,k}\\
&=&\Phi _N(\mu \sqcup (2k-1 )^{N-k-m_{2k-1}+1})-[\Diamond]_{N,k}=\Phi _N(\mu '). 
\end{eqnarray*}
Repeat this transform until the multiples of the smallest parts will be different and less than $2N-k$. 
Let $m_{2k-1}(\lambda )>m_{2k-1}(\mu)$. 
Then, 
\[
\lambda \sqcup (2k-1)^{N-k-m_{2k-1}(\lambda )} \not \in \mathcal{ROP}_N, 
\mu \sqcup (2k-1)^{N-k-m_{2k-1}(\lambda )} \in \mathcal{ROP}_N. 
\]
Therefore, 
\[
\Phi _N(\lambda \sqcup (2k-1)^{N-k-m_{2k-1}(\lambda )}) \not \in \mathcal{RL}_N, 
\Phi _N(\mu \sqcup (2k-1)^{N-k-m_{2k-1}(\lambda )}) \in \mathcal{RL}_N. 
\]
It is compatible. 
\end{proof}
Then $\Phi $ is injection between same order sets. 
Therefore $\Phi $ is bijection. 
We proved Theorem \ref{thm;rl1}.

\indent
For any positive integer $N$, 
\[
\sum_{\lambda \in \mathcal{ROP}_N}{q^{|\lambda |}}
=\sum_{\mu \in \mathcal{RL}_N}{q^{|\mu |}}
=\prod_{k=1}^{N}\frac{1-q^{\Diamond _{N,k}}}{1-q^{2k-1}}. 
\]
\section{Lecture hall partition theorem}
We consider reduction of odd partition and lecture hall partition. 
First, for odd partition. 
Let $\lambda=(1^{m_1} 2^{m_2} \ldots ) \in \mathcal{OP}_N$. 
If $m_k>N-k$, we transform $\lambda $ as
\[
(1^{m_1} 2^{m_2} \ldots k^{m_k} \ldots ) \mapsto (1^{m_1} 2^{m_2} \ldots k^{m_k-(N-k+1)} \ldots ). 
\]
Repeat this transform until $m_k\leq N-k$ for all $k$. 
Then the resulting partition $[\lambda ]$ is reduced odd partition.

Next, for lecture hall partition. 
Let $\mu =(\mu_1, \mu_2, \ldots ) \in \mathcal{L}_N$. 
If $\frac{\mu_j}{N-j+1}-1\geq \frac{\mu_{j+1}}{N-j}$, 
then we transform $\mu$ as 
\[
\mu \mapsto \mu -(N, N-1, \ldots , N-j+1) = 
\left \{
\begin{array}{ll}
\mu - [\Diamond]_{N, j}&(j:\textrm{odd})\\
\mu - [\Diamond]_{N, 2N-j+1}&(j:\textrm{even})
\end{array}
\right.
.
\]
Repeat this transform until $\frac{\mu_j}{N-j+1}-1< \frac{\mu_{j+1}}{N-j}$ for all $j$. 
Then the resulting partition $[\mu ]$ is reduced lecture hall partition.

By the Lemma \ref{lem;1}, $|[\Diamond ]_{N,k}|=|(2k-1)^{N-k+1}|=\Diamond_{N,k}$. 
Then, 
\begin{eqnarray*}
&&\sum_{\lambda \in \mathcal{OP}_N}{q^{|\lambda |}}\\
&=&\prod_{k= 1}^{N}{\sum_{j\geq 0}{q^{j\times |(2k-1)^{N-k+1}|}}} \times \sum_{\lambda \in \mathcal{ROP}_N}{q^{|\lambda |}}\\
&=&\prod_{k= 1}^{N}\frac{1}{1-{q^{\Diamond_{N,k}}}} \times \prod_{k= 1}^{N}{\frac{1-q^{\Diamond_{N-k}}}{1-q^{2k-1}}}\\
&=&\prod_{k= 1}^{N}{\sum_{j\geq 0}{q^{j\times |[\Diamond ]_{N,k}|}}} \times \sum_{\mu \in \mathcal{RL}_N}{q^{|\mu |}}\\
&=&\sum_{\mu \in \mathcal{L}_N}{q^{|\mu|}}. 
\end{eqnarray*}
\begin{thm}[Lecture Hall Partition Theorem \cite{b-me}]
For any positive integer $N$, 
\[
\sum_{\lambda \in \mathcal{OP}_N}{q^{|\lambda |}}
=\sum_{\mu \in \mathcal{L}_N}{q^{|\mu|}}
=\prod_{k=1}^{N}{\frac{1}{1-q^{2k-1}}}. 
\]
\end{thm}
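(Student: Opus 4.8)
The plan is to establish rigorously the chain of identities displayed just above the statement, whose only non-elementary ingredient is Theorem~\ref{thm;rl1}. The second equality is almost a definition: $\mathcal{OP}_N$ is precisely the set of odd partitions all of whose parts lie in $\{1,3,\ldots,2N-1\}$, with arbitrary multiplicities, so its generating function factors over the $N$ admissible parts,
\[
\sum_{\lambda\in\mathcal{OP}_N}q^{|\lambda|}=\prod_{k=1}^{N}\bigl(1+q^{2k-1}+q^{2(2k-1)}+\cdots\bigr)=\prod_{k=1}^{N}\frac{1}{1-q^{2k-1}}.
\]
It therefore remains to prove $\sum_{\lambda\in\mathcal{OP}_N}q^{|\lambda|}=\sum_{\mu\in\mathcal{L}_N}q^{|\mu|}$, and for this I would set up a matching pair of ``reduction'' factorizations, one on each side, and then invoke Theorem~\ref{thm;rl1} for the reduced pieces.

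On the odd side I would check that the reduction recalled above --- repeatedly delete one copy of the block $(2k-1)^{N-k+1}$ whenever $m_{2k-1}>N-k$ --- is a bijection from $\mathcal{OP}_N$ onto $\mathcal{ROP}_N\times\mathbb{Z}_{\ge0}^{N}$, sending $\lambda$ to $\bigl([\lambda],(c_1,\ldots,c_N)\bigr)$ where $c_k$ counts the deletions of the $k$-th block. Termination is clear since every deletion strictly decreases $|\lambda|$; the output $[\lambda]$ lies in $\mathcal{ROP}_N$ by construction; the inverse re-adjoins $c_k$ copies of $(2k-1)^{N-k+1}$ for each $k$; and the $c_k$ are uniquely recovered since $[\lambda]$ satisfies $m_{2k-1}\le N-k$, forcing $c_k=\lfloor m_{2k-1}(\lambda)/(N-k+1)\rfloor$. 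Since $|(2k-1)^{N-k+1}|=\Diamond_{N,k}$ by Lemma~\ref{lem;1}, this gives
\[
\sum_{\lambda\in\mathcal{OP}_N}q^{|\lambda|}=\Bigl(\prod_{k=1}^{N}\frac{1}{1-q^{\Diamond_{N,k}}}\Bigr)\sum_{\lambda\in\mathcal{ROP}_N}q^{|\lambda|}.
\]

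On the lecture hall side I would argue likewise that the reduction recalled above is a bijection from $\mathcal{L}_N$ onto $\mathcal{RL}_N\times\mathbb{Z}_{\ge0}^{N}$. The cleanest route, I expect, is to attach to $\mu\in\mathcal{L}_N$ the ``slacks'' $s_j(\mu):=\frac{\mu_j}{N-j+1}-\frac{\mu_{j+1}}{N-j}\ge0$ for $j<\ell(\mu)$ and $s_{\ell(\mu)}(\mu):=\frac{\mu_{\ell(\mu)}}{N-\ell(\mu)+1}$, so that being reduced means $s_j(\mu)<1$ for every $j$, and then to observe that subtracting the block $B_j:=(N,N-1,\ldots,N-j+1)$ at position $j$ --- which equals some $[\Diamond]_{N,k}$, with $j\mapsto k$ a bijection of $\{1,\ldots,N\}$ --- lowers $s_j$ by exactly $1$ and leaves every other $s_i$ unchanged. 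Hence the reduction decouples across positions: at each $j$ it must be applied exactly $c_j:=\lfloor s_j(\mu)\rfloor$ times, irrespective of order, the reduced partition $[\mu]=\mu-\sum_j c_jB_j$ is thereby uniquely determined, and conversely adding any nonnegative integer combination $\sum_j c_jB_j$ to an element of $\mathcal{RL}_N$ lands back in $\mathcal{L}_N$. Since $|B_j|=\Diamond_{N,k}$ by the very definition of $\Diamond_{N,k}$ as a sum and $j\mapsto k$ is a bijection, this gives
\[
\sum_{\mu\in\mathcal{L}_N}q^{|\mu|}=\Bigl(\prod_{k=1}^{N}\frac{1}{1-q^{\Diamond_{N,k}}}\Bigr)\sum_{\mu\in\mathcal{RL}_N}q^{|\mu|}.
\]
Comparing the two factorizations, the prefactors are literally the same while the reduced sums agree by Theorem~\ref{thm;rl1}; hence $\sum_{\lambda\in\mathcal{OP}_N}q^{|\lambda|}=\sum_{\mu\in\mathcal{L}_N}q^{|\mu|}$, which with the first paragraph completes the proof. (Alternatively, substituting $\sum_{\lambda\in\mathcal{ROP}_N}q^{|\lambda|}=\prod_{k=1}^{N}\frac{1-q^{\Diamond_{N,k}}}{1-q^{2k-1}}$ from the end of Section~3 into either factorization makes the factors $1-q^{\Diamond_{N,k}}$ cancel against the prefactor, recovering $\prod_{k=1}^{N}\frac1{1-q^{2k-1}}$ at once.)

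The genuinely substantial step is Theorem~\ref{thm;rl1}, already established via the bijection $\Phi_N$; everything else is bookkeeping, and the part most in need of care is the lecture hall reduction. There one must verify that subtracting $B_j$ keeps the array inside $\mathcal{L}_N$ --- the inequality $\frac{\mu_j}{N-j+1}-1\ge\frac{\mu_{j+1}}{N-j}$ that triggers the step at $j$ is exactly what preserves the $j$-th lecture hall inequality, and all the other inequalities merely undergo a uniform shift --- and that the correspondence is genuinely order-independent; I expect the slack function $s_j$ to render both points transparent, since it exhibits the reductions at distinct positions as non-interacting. Once that and the identification $B_j=[\Diamond]_{N,k}$ are in place, the two factorizations line up term by term and the theorem falls out of Theorem~\ref{thm;rl1} and Lemma~\ref{lem;1}.
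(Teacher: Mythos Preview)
Your proposal is correct and follows essentially the same route as the paper: factor each generating function through its reduction (to $\mathcal{ROP}_N$ and $\mathcal{RL}_N$ respectively), identify both prefactors with $\prod_k(1-q^{\Diamond_{N,k}})^{-1}$ via Lemma~\ref{lem;1}, and then match the reduced sums using Theorem~\ref{thm;rl1}. You supply more justification than the paper does---in particular the slack-function argument showing that the lecture-hall reduction decouples across positions and is therefore a genuine bijection onto $\mathcal{RL}_N\times\mathbb{Z}_{\ge0}^{N}$---but the overall architecture is the same.
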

Same correspondence of $(2k-1)^{N-k+1}$ and $[\Diamond]_{N,k}$ was proved 
about a property of $\Phi_N$. 
Then $\Phi _N$ is also bijection from $\mathcal{OP}_N$ to $\mathcal{L}_N$.

\bigskip
Last of this paper, 
we introduce refined versions of lecture hall partition theorems. 
Let $\lambda =(\lambda _1, \lambda _2, \ldots )\in \mathcal{P}$, 
we denote the alternative size of $\lambda $
\[
|\lambda |_a:=\sum_{k=1}^{\ell(\lambda )}{(-1)^{k+1}\lambda _k}. 
\]
For trapezoid, 
\[
|[\Diamond ]_{N,k}|_a=k. 
\]
Then, 
\[
|A_{N,k}|_a=|[\Diamond]_{N,k}|_a-|[\Diamond]_{N,k-1}|_a=1. 
\]
Next theorems follow from the definition of $\Phi _N$. 
\begin{thm}For any positive integer $N$, 
\[
\sum_{\lambda \in \mathcal{OP}_N}{t^{\ell(\lambda )}q^{|\lambda |}}
=\sum_{\mu \in \mathcal{L}_N}{t^{|\mu |_a}q^{|\mu |}}
=\prod_{k=1}^{N}\frac{1}{1-tq^{2k-1}}
.
\]
\end{thm}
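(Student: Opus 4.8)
The plan is to apply the bijection $\Phi_N\colon\mathcal{OP}_N\to\mathcal{L}_N$ of the previous sections once more, now recording the extra statistics on each side: the length $\ell(\lambda)$ for odd partitions and the alternative size $|\mu|_a$ for lecture hall partitions. The rightmost equality is immediate, since an element of $\mathcal{OP}_N$ is nothing but a choice of nonnegative multiplicities $m_1,m_3,\ldots,m_{2N-1}$, for which $\ell(\lambda)=\sum_{k=1}^{N}m_{2k-1}$ and $|\lambda|=\sum_{k=1}^{N}(2k-1)m_{2k-1}$, so that
\[
\sum_{\lambda\in\mathcal{OP}_N}t^{\ell(\lambda)}q^{|\lambda|}=\prod_{k=1}^{N}\sum_{m\ge 0}\left(tq^{2k-1}\right)^{m}=\prod_{k=1}^{N}\frac{1}{1-tq^{2k-1}}.
\]
Since $\Phi_N$ is a size-preserving bijection from $\mathcal{OP}_N$ onto $\mathcal{L}_N$, the middle equality follows once one proves the statistic identity
\[
\ell(\lambda)=|\Phi_N(\lambda)|_a\qquad\text{for every }\lambda\in\mathcal{OP}_N .
\]

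I would prove this by induction on $\ell(\lambda)$. For $\lambda=\emptyset$ both sides are $0$. If $\ell(\lambda)\ge 1$, let $2l-1=\lambda_{\ell(\lambda)}$ be the smallest part and put $\lambda'=\lambda\setminus(2l-1)$, so that $2l-1\le\lambda'_{\ell(\lambda')}$ and $\ell(\lambda)=\ell(\lambda')+1$; by the defining recursion of $\Phi_N$,
\[
\Phi_N(\lambda)=\Phi_N(\lambda')+A_{i(\lambda'),\,l},
\]
the sum being coordinatewise. The assignment $\nu\mapsto|\nu|_a=\sum_{j\ge 1}(-1)^{j+1}\nu_j$ is a linear functional on integer sequences with finitely many nonzero entries, hence additive under ``$+$'' — this is the one point where one must work with sequences rather than partitions, so that the footnoted remark that $A_{i(\lambda'),l}$ need not itself be a partition causes no trouble. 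Therefore $|\Phi_N(\lambda)|_a=|\Phi_N(\lambda')|_a+|A_{i(\lambda'),l}|_a$, which by the inductive hypothesis equals $\ell(\lambda')+|A_{i(\lambda'),l}|_a$. So the whole identity reduces to
\[
|A_{i,l}|_a=1\qquad\text{for all admissible }i,l ,
\]
which is the computation recorded just before the statement.

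To carry out that computation one uses $A_{i,l}=[\Diamond]_{l+i-1,l}-[\Diamond]_{l+i-2,l}$ together with the evaluation $|[\Diamond]_{M,l}|_a=M-l+1$, valid for every $M\ge l-1$ (the degenerate case $[\Diamond]_{l-1,l}=\emptyset$ contributing $0$). The latter is obtained by splitting the definition of $[\Diamond]_{M,l}$: for $l\le M/2$ the sequence $(M,M-1,\ldots,M-2l+2)$ has the odd number $2l-1$ of terms and its alternating sum telescopes to $M-l+1$, while for $l>M/2$ the sequence $(M,M-1,\ldots,2l-M-1)$ has the even number $2(M-l+1)$ of terms and its alternating sum again equals $M-l+1$; subtracting gives
\[
|A_{i,l}|_a=\left((l+i-1)-l+1\right)-\left((l+i-2)-l+1\right)=1 ,
\]
which closes the induction. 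I do not expect a genuine obstacle here: the structural content is already in the construction of $\Phi_N$, and what remains is only the bookkeeping above, namely applying additivity of $|\cdot|_a$ at the level of integer sequences and keeping the two parity cases of $|[\Diamond]_{M,l}|_a$ apart.
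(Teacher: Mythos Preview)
Your proof is correct and follows the paper's own approach: both equalities come from the bijection $\Phi_N$ together with the fact that each building block satisfies $|A_{i,l}|_a=1$, which you obtain from $|[\Diamond]_{M,l}|_a=M-l+1$. The paper's one-line justification actually records $|[\Diamond]_{N,k}|_a=k$ and subtracts along the second index, which is a slip; your computation is the accurate one, and the induction on $\ell(\lambda)$ you spell out is precisely what the paper leaves implicit in ``follow from the definition of $\Phi_N$''.
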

\begin{thm}For any positive integer $N$, 
\[
\sum_{\lambda \in \mathcal{ROP}_N}{t^{\ell(\lambda )}q^{|\lambda |}}
=\sum_{\mu \in \mathcal{RL}_N}{t^{|\mu |_a}q^{|\mu |}}
=\prod_{k=1}^{N}{\frac{1-t^{N-k+1}q^{\Diamond_{N,k}}}{1-tq^{2k-1}}}
.
\]
\end{thm}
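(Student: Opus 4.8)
The plan is to recycle the bijection $\Phi_N\colon\mathcal{ROP}_N\to\mathcal{RL}_N$ established for Theorem~\ref{thm;rl1} and to verify that it carries the pair of statistics $(\ell(\lambda),|\lambda|)$ on the source to the pair $(|\mu|_a,|\mu|)$ on the target. The size is already matched by construction, $|\Phi_N(\lambda)|=|\lambda|$, so the one new ingredient I need is
\[
|\Phi_N(\lambda)|_a=\ell(\lambda)\qquad(\lambda\in\mathcal{OP}_N).
\]
Granting this, summing $t^{\ell(\lambda)}q^{|\lambda|}$ over $\lambda\in\mathcal{ROP}_N$ and re-indexing by $\mu=\Phi_N(\lambda)$ gives the first equality of the theorem immediately.

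To prove the displayed identity I would induct on $\ell(\lambda)$, mirroring the recursive definition of $\Phi_N$. The base case $\lambda=\emptyset$ is trivial. For $\lambda$ of length $\ell+1$, remove one copy of its smallest part $2k-1$ to write $\lambda=\lambda'\sqcup(2k-1)$, where $\ell(\lambda')=\ell$ and the smallest part of $\lambda'$ is still at least $2k-1$; then $\Phi_N(\lambda)=\Phi_N(\lambda')+A_{i(\lambda'),k}$ by the definition of $\Phi_N$. Regarding $|\cdot|_a$ as a functional on sequences that are eventually zero (the alternating size of a partition being recovered by padding with zeros), it is linear for the componentwise operation ``$+$'', so
\[
|\Phi_N(\lambda)|_a=|\Phi_N(\lambda')|_a+|A_{i(\lambda'),k}|_a=\ell+1=\ell(\lambda),
\]
using the induction hypothesis and the already recorded value $|A_{i,k}|_a=1$ of every step (which comes from the computation of $|[\Diamond]_{\bullet,k}|_a$).

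For the closed product I would note that an element of $\mathcal{ROP}_N$ is exactly a choice, for each $k$ with $1\le k\le N$, of a multiplicity $m_{2k-1}\in\{0,1,\ldots,N-k\}$, contributing the factor $t^{m_{2k-1}}q^{(2k-1)m_{2k-1}}$ to $t^{\ell(\lambda)}q^{|\lambda|}$; summing the finite geometric series $\sum_{m=0}^{N-k}(tq^{2k-1})^{m}$ and using Lemma~\ref{lem;1} to rewrite $(2k-1)(N-k+1)=\Diamond_{N,k}$ produces $\prod_{k=1}^{N}\frac{1-t^{N-k+1}q^{\Diamond_{N,k}}}{1-tq^{2k-1}}$, as claimed.

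The step I expect to need the most care is the additivity used in the induction: one must say explicitly that $|\cdot|_a$ is applied to $\N$-indexed sequences that are eventually zero, so that $|\mu+A|_a=|\mu|_a+|A|_a$ holds even though $A_{i,k}$ may fail to be a partition and even though the number of parts can jump at a step. Everything else is bookkeeping, since the bijectivity of $\Phi_N$ and the value $|A_{i,k}|_a=1$ are already in hand.
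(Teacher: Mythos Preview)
Your proposal is correct and is exactly the argument the paper intends: the paper records $|A_{i,k}|_a=1$ and then simply states that the refined theorems ``follow from the definition of $\Phi_N$,'' which is precisely the induction on $\ell(\lambda)$ you spell out together with the easy geometric-series computation of the product side. Your explicit remark about extending $|\cdot|_a$ linearly to eventually-zero sequences (so that $|\mu+A|_a=|\mu|_a+|A|_a$ makes sense even when $A$ is not a partition) is a detail the paper glosses over but that is indeed needed.
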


\end{document}